\documentclass[12pt]{amsart}

\usepackage{amsmath,amsthm,amssymb,amsfonts}
\usepackage{mathtools}
\usepackage{enumitem}
\usepackage[colorlinks=true,linkcolor=black,citecolor=black,urlcolor=blue]{hyperref}

\theoremstyle{plain}
\newtheorem{theorem}{Theorem}[section]
\newtheorem{lemma}[theorem]{Lemma}
\newtheorem{corollary}[theorem]{Corollary}
\newtheorem{proposition}[theorem]{Proposition}

\theoremstyle{definition}
\newtheorem{definition}[theorem]{Definition}

\newtheorem{example}[theorem]{Example}

\theoremstyle{remark}
\newtheorem{remark}[theorem]{Remark}

\numberwithin{equation}{section}

\newcommand{\Fp}{\mathbb{F}_p}
\newcommand{\Ftwo}{\mathbb{F}_2}

\newcommand{\res}{\operatorname{res}}
\newcommand{\im}{\operatorname{im}}
\newcommand{\Span}{\operatorname{span}}

\title[Fibre-Product Stability and Composita]{Fibre-Product Stability, Kummer Gluing, and Composita for Universal Koszulity}
\author{Marina Palaisti}
\date{\today}

\begin{document}

	\begin{abstract}
		We study universal Koszulity under algebraic gluing and field composita.
		For quadratic fibre products, orthogonally split pullbacks reduce to direct
		sums, while Stanley--Reisner pullbacks arise from graph gluing preserving
		universal Koszulity under a dominating-clique hypothesis. For composita,
		Kummer theory and the norm-residue theorem yield a graph criterion from
		restriction images and degree-two cup-product relations, verified for
		iterated Laurent-series fields with nonzero mixed symbols. In a nonabelian
		RAAG family, every finite \(p\)-extension has explicitly determined
		universally Koszul cohomology, with
		\[
		G_L(p)\cong F_{1+q(d-1)}\times\mathbb Z_p^s.
		\]
	\end{abstract}

\maketitle

\noindent\textbf{Keywords.}
Universal Koszulity; fibre-product algebras; colon ideals;
Galois cohomology; composita; Kummer theory; right-angled Artin groups;
Stanley--Reisner algebras

\medskip
\noindent\textbf{MSC2020.}
12F10; 12G05; 16S37; 16E05; 20E18

\section{Introduction}\label{sec:intro}

Universal Koszulity is a strengthening of Koszulity for quadratic algebras
generated in degree one. In Galois cohomology it is formulated by requiring
every left ideal generated in degree one to have a linear resolution.
Enhanced Koszul properties in Galois cohomology were developed in
\cite{MinacPalaistiPasiniTan,PalaistiThesis}; related Koszul, PBW, and
quadratic-dual phenomena are studied in
\cite{MinacPasiniQuadrelliTan}. We use in particular the direct-sum
stability theorem \cite[Proposition~30]{MinacPalaistiPasiniTan}.

This paper develops two forms of gluing. The first is algebraic. Given
surjective graded algebra maps
\[
\varphi_i:A_i\twoheadrightarrow C,\qquad i=1,2,
\]
we study the fibre product
\[
A=A_1\times_C A_2.
\]
The behaviour of degree-one-generated ideals under this construction is
controlled by their component ideals and colon ideals. Section~\ref{sec:generalcriterion}
records the exact colon formula and a general criterion using
\[
\widehat{\mathcal L}(A)=\mathcal L(A)\cup\{A\},
\]
which accounts for the case in which a component colon equals the whole
algebra.

The main algebraic results occur in Section~\ref{sec:structural}. For an
orthogonally split map
\[
A_i\cong C\sqcap B_i\longrightarrow C,
\]
the pullback has the explicit form
\[
A_1\times_C A_2\cong C\sqcap B_1\sqcap B_2,
\]
so direct-sum stability gives universal Koszulity. For exterior
Stanley--Reisner algebras, a gluing of graphs along a common induced subgraph
realizes the corresponding algebraic fibre product. When the two graphs are
diagonal and the common subgraph is a dominating clique, the glued graph is
diagonal. Cassella--Quadrelli's characterization then yields universal
Koszulity \cite{CassellaQuadrelli}.

The second form of gluing is arithmetic. Let \(K_1,K_2\) be algebraic
subextensions of a fixed separable closure of \(k\), and put \(K=K_1K_2\).
Inside the absolute Galois group of \(k\),
\[
G_K=G_{K_1}\cap G_{K_2}.
\]
The induced cohomological maps are the restrictions
\[
H^\bullet(K_i,\mathbb F_p)\longrightarrow H^\bullet(K,\mathbb F_p).
\]
Assume \(\operatorname{char}(k)\neq p\) and \(\mu_p\subseteq k\).
Kummer theory identifies \(H^1(F,\mathbb F_p)\) with
\(F^\times/F^{\times p}\), and the norm-residue theorem identifies cup
products with Milnor symbols. Since mod-\(p\) Galois cohomology is quadratic,
the restriction images in degree one and the full kernel of the degree-two
cup-product map determine the resulting quadratic presentation.

Section~\ref{sec:composita} gives two criteria in these terms. The
orthogonal criterion covers zero mixed products and disjoint positive-degree
subalgebras. The Kummer graph criterion allows a common restriction image
and a prescribed mixed multiplication table. Its hypothesis identifies the
entire degree-two relation space with the relation space of an exterior
Stanley--Reisner algebra. A diagonal total graph therefore gives universally
Koszul cohomology.

We verify this criterion for an explicit infinite family. Let \(\kappa\)
be algebraically closed with \(\operatorname{char}(\kappa)\neq p\), and
work inside
\[
M=\kappa((u_1))\cdots((u_d)).
\]
For \(S\subseteq\{1,\dots,d\}\), the field \(K_S\) is obtained by using
\(u_j\) in the coordinates indexed by \(S\) and \(u_j^p\) in the remaining
coordinates. We prove
\[
K_SK_T=K_{S\cup T},\qquad K_S\cap K_T=K_{S\cap T},
\]
compute all degree-one restriction images, and show that the mixed symbols are
the nonzero exterior basis elements. The total Kummer graph is complete.

Section~\ref{sec:raag-composita} gives a nonabelian family. By
\cite[Theorem~1.2]{SnopceZalesskii}, diagonal pro-\(p\) RAAGs have
hereditary RAAG subgroup structure and admit maximal pro-\(p\) Galois
realizations. Together with Cassella--Quadrelli, these properties give
stability throughout the finite \(p\)-extension lattice of a realizing field. For
\[
G_k(p)\cong F_d\times\mathbb Z_p^s,
\]
we classify every open subgroup arising from a finite field extension and
obtain
\[
G_L(p)\cong F_{1+q(d-1)}\times\mathbb Z_p^s,
\qquad
q=[F_d:\operatorname{pr}_{F_d}G_L(p)].
\]
Independent Kummer characters on the free factor give composita with
\[
G_K(p)\cong F_{1+p^r(d-1)}\times\mathbb Z_p^s.
\]
The corresponding cohomology algebras are exterior Stanley--Reisner algebras
of diagonal join graphs.

The paper is organized accordingly: Sections~\ref{sec:prelim}--\ref{sec:structural}
develop the algebraic results; Section~\ref{sec:composita} treats Kummer
gluing and Laurent-series composita; Section~\ref{sec:raag-composita}
treats diagonal-RAAG finite \(p\)-extension lattices and explicit Kummer
composita; the final section records further extensions of the method.

\section{Preliminaries}\label{sec:prelim}

Throughout, \(p\) is a fixed prime and all graded algebras in the
purely algebraic sections are connected graded algebras over
\(\Fp\). For a connected graded algebra \(A\), write
\[
A_+=\bigoplus_{n\geq 1}A^n.
\]

\subsection{Quadratic algebras and universal Koszulity}

A graded algebra
\[
A=\bigoplus_{n\geq0}A^n
\]
is \emph{quadratic} if \(A^0=\Fp\), if \(A\) is generated by \(A^1\),
and if
\[
A\cong T(A^1)/\langle R\rangle
\]
for a subspace \(R\subseteq A^1\otimes A^1\).

\begin{definition}[Universal Koszulity
{\cite{ConcaUniversallyKoszul,MinacPalaistiPasiniTan}}]
\label{def:uk}
Let \(A\) be a quadratic algebra. Denote by \(\mathcal L(A)\) the
collection of graded \emph{left} ideals of \(A\) generated by
subspaces of \(A^1\). The algebra \(A\) is \emph{universally Koszul}
if every \(I\in\mathcal L(A)\) has a linear free resolution as a
graded left \(A\)-module.
\end{definition}

\begin{lemma}[{\cite[Proposition~20]{MinacPalaistiPasiniTan}}]
\label{lem:colon-criterion}
A quadratic algebra \(A\) is universally Koszul if and only if for
every \(I\in\mathcal L(A)\) and every \(x\in A^1\setminus I^1\), the
left colon ideal
\[
I:(x)=\{a\in A: ax\in I\}
\]
belongs to \(\mathcal L(A)\).
\end{lemma}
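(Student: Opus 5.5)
The plan is the standard "colon ideals control linear resolutions" argument, working in the category of graded left \(A\)-modules. For \(I\in\mathcal L(A)\) and \(x\in A^1\setminus I^1\) there is a short exact sequence
\[
0\longrightarrow \bigl(A/(I:(x))\bigr)(-1)\xrightarrow{\;\cdot x\;} A/I\longrightarrow A/(I+Ax)\longrightarrow 0 .
\]
Here the first map sends the class of \(a\) to the class of \(ax\); its kernel is exactly \(I:(x)\), and its image is \((I+Ax)/I\). Since \(I\) has a linear resolution if and only if \(A/I\) is Koszul, meaning \(\operatorname{Tor}^A_{i}(A/I,\Fp)\) is concentrated in degree \(i\), I will phrase both directions via the long exact Tor sequence of this extension. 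Note that \(I+Ax\in\mathcal L(A)\) again, so the collection \(\mathcal L(A)\) is closed under adding one generator.

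For the direction (\(\Leftarrow\)), I will prove that \(A/I\) is Koszul for all \(I\in\mathcal L(A)\). The induction is on the homological degree \(i\) together with \(\dim I^1\), in the style of Conca's Koszul filtrations.

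The family \(\mathcal F=\mathcal L(A)\) is a Koszul filtration in the following sense: every nonzero \(J\in\mathcal F\) can be written as \(I+Ax\) with \(I\in\mathcal F\), \(\dim I^1=\dim J^1-1\) and \(x\in J^1\setminus I^1\), and by hypothesis \(I:(x)\in\mathcal F\).

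One then shows by induction on \(i\), and for fixed \(i\) on \(\dim J^1\), that \(\operatorname{Tor}_i(A/J,\Fp)_j=0\) for \(j\neq i\), simultaneously for all \(J\in\mathcal F\). The long exact sequence gives
\[
\operatorname{Tor}_i(A/I)_j\to\operatorname{Tor}_i(A/J)_j\to\operatorname{Tor}_{i-1}(A/(I:x))_{j-1}.
\]
The outer terms vanish for \(j\ne i\): the left one by the inner induction on \(\dim I^1\), and the right one by the outer induction, since \(I:(x)\in\mathcal F\). The base cases are \(J=0\), where \(A/J=A\) is free, and \(i=0\). The colon ideal may have arbitrary dimension in degree one, which is why the outer induction must run on \(i\) and not on dimension.

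For (\(\Rightarrow\)), assume \(A\) is universally Koszul. Then \(A/I\) and \(A/(I+Ax)\) are Koszul modules. The relevant part of the long exact sequence is
\[
\operatorname{Tor}_{i+1}(A/(I+Ax))_{j+1}\to\operatorname{Tor}_i(A/(I:x))_{j}\to\operatorname{Tor}_i(A/I)_{j+1}.
\]
For \(j\ne i\) we have \(j+1\ne i+1\), so both outer terms vanish. Hence \(A/(I:x)\) has a linear resolution and is Koszul.

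In particular, taking \(i=1\), the ideal \(I:(x)\) is generated in degree \(1\). It contains \(I\) and is a graded left ideal generated by elements of \(A^1\), so \(I:(x)\in\mathcal L(A)\).

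The main obstacle is the degree-zero and properness bookkeeping. Since \(x\notin I^1\), the colon ideal has \((I:x)^0=0\), so it is a proper ideal. One must also check that Koszulity of \(A/(I:x)\) really forces generation in degree one. If \(I:(x)\) had a minimal generator in degree \(\ge2\), this would produce \(\operatorname{Tor}_1\) in degree \(\ge2\), contradicting linearity, so this needs only a careful use of the \(i=1\) case.
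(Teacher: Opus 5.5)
The paper gives no proof of this lemma; it is imported from \cite[Proposition~20]{MinacPalaistiPasiniTan}. Your proposal is the standard Koszul-filtration argument for this characterization. It uses the colon sequence \(0\to (A/(I:(x)))(-1)\to A/I\to A/(I+Ax)\to 0\), an induction (outer on homological degree, inner on \(\dim J^1\)) for (\(\Leftarrow\)), and the long exact \(\operatorname{Tor}\) sequence for (\(\Rightarrow\)). The (\(\Leftarrow\)) half is correct as written, provided \(\dim A^1<\infty\) so that induction on \(\dim J^1\) makes sense. Otherwise you need an extra direct-limit step, using that \(\operatorname{Tor}\) commutes with filtered colimits.

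In (\(\Rightarrow\)), however, the sentence ``for \(j\ne i\) both outer terms vanish'' is false. Consider
\[
\operatorname{Tor}_{i+1}\bigl(A/(I+Ax)\bigr)_{j+1}\to\operatorname{Tor}_i\bigl(A/(I:(x))\bigr)_{j}\to\operatorname{Tor}_i(A/I)_{j+1}.
\]
Linearity of \(A/I\) kills the right-hand term only when \(j+1\neq i\). At \(j=i-1\) that term is the linear strand \(\operatorname{Tor}_i(A/I)_i\), which is typically nonzero (for \(i=1\) it is \(I^1\)). So the sequence says nothing there.

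The vanishing for \(j<i\) must come from a separate fact. Since \(x\notin I\), one has \(I:(x)\subseteq A_+\). Over a connected graded algebra, the \(i\)-th term of a minimal resolution of \(A/N\) with \(N\subseteq A_+\) is generated in degrees \(\ge i\).

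For the lemma itself you only need \(i=1\). Your sequence does give \(\operatorname{Tor}_1(A/(I:(x)))_j=0\) for \(j\ge 2\). The case \(j=0\) is exactly the properness \((I:(x))^0=0\) that you record at the end. So the conclusion \(I:(x)\in\mathcal L(A)\) survives, once the argument is split into these two ranges instead of the single claim for \(j\ne i\).
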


\begin{remark}
For a graded-commutative algebra, an ideal generated by a degree-one
subspace is automatically two-sided. We nevertheless keep the
left-ideal formulation because it is the formulation used in
\cite{MinacPalaistiPasiniTan} and because the fibre-product arguments
do not require commutativity.
\end{remark}

For later use, define
\[
\widehat{\mathcal L}(A)=\mathcal L(A)\cup\{A\}.
\]
If \(I\in\mathcal L(A)\) and \(x\in A^1\), then universal Koszulity
implies
\[
I:(x)\in\widehat{\mathcal L}(A).
\]
Indeed, if \(x\notin I\), this is Lemma~\ref{lem:colon-criterion}, while
if \(x\in I\), then \(I:(x)=A\).

\subsection{Direct sums of graded algebras}

We recall the algebraic operation corresponding to free products of
pro-\(p\) groups in cohomology.

\begin{definition}[Direct-sum algebra]
\label{def:directsum}
Let \(A,B\) be connected graded \(\Fp\)-algebras. Their \emph{direct sum}
\[
A\sqcap B
\]
is the connected graded algebra whose underlying graded vector space is
\[
\Fp\oplus A_+\oplus B_+
\]
and whose multiplication restricts to the original multiplication on
\(A\) and on \(B\), while
\[
A_+B_+=B_+A_+=0.
\]
Equivalently,
\[
A\sqcap B\cong A\times_{\Fp}B
\]
with respect to the augmentation maps.
\end{definition}

\begin{theorem}[{\cite[Proposition~30]{MinacPalaistiPasiniTan}}]
\label{thm:directsum-uk}
If \(A\) and \(B\) are universally Koszul, then
\(A\sqcap B\) is universally Koszul.
\end{theorem}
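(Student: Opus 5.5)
We verify the colon criterion of Lemma~\ref{lem:colon-criterion} for \(D=A\sqcap B\). First, \(D\) is quadratic: \(D^1=A^1\oplus B^1\), and a presentation is \(T(A^1\oplus B^1)/\langle R_A+R_B+A^1\otimes B^1+B^1\otimes A^1\rangle\). Every \(I\in\mathcal L(D)\) is generated by a subspace \(V\subseteq A^1\oplus B^1\). Let \(\pi_A,\pi_B\) be the projections of \(D^1\) onto \(A^1,B^1\). We will compute \(I\) and its colon ideals in terms of ideals of \(A\) and \(B\).

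\emph{Step 1: structure of \(I\).} For \(v=a+b\in V\) we have \(A_+v=A_+a\) and \(B_+v=B_+b\). Hence
\[
I_+^{\ge2}=A_+\pi_A(V)\oplus B_+\pi_B(V),
\]
so that
\[
I=V\oplus A_+ I_A'\oplus B_+ I_B',
\]
where \(I_A'=A\pi_A(V)\in\mathcal L(A)\) and \(I_B'=B\pi_B(V)\in\mathcal L(B)\). Thus in degrees \(\ge2\), \(I\) agrees with \(I_A'\oplus I_B'\).

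\emph{Step 2: colon ideals.} Take \(x=a+b\in D^1\setminus V\) and \(c\in D\). Write \(c=\lambda+c_A+c_B\) with \(\lambda\in\Fp\). Then
\[
cx=\lambda x+c_Aa+c_Bb,
\]
and the membership \(cx\in I\) splits by degree. In degree \(1\) the condition is \(\lambda x\in V\), which forces \(\lambda=0\). In degrees \(\ge2\), since \(A_+\) and \(B_+\) are complementary, the condition is \(c_Aa\in I_A'\) and \(c_Bb\in I_B'\), degree by degree. Therefore
\[
I:(x)=\bigl(I_A':(a)\bigr)_+\oplus\bigl(I_B':(b)\bigr)_+.
\]
Here we use that \(I_A':(a)\) means \(A\) when \(a\in I_A'\); this includes \(a=0\).

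\emph{Step 3: generation in degree one.} By universal Koszulity of \(A\) and \(B\), together with the remark on \(\widehat{\mathcal L}\), we have \(J_A:=I_A':(a)\in\widehat{\mathcal L}(A)\) and \(J_B:=I_B':(b)\in\widehat{\mathcal L}(B)\). The main point is to check that \((J_A)_+\oplus(J_B)_+\) is generated by its degree-one part \(J_A^1\oplus J_B^1\).

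If \(J_A\) is a proper ideal, it lies in \(\mathcal L(A)\), so it is generated by \(J_A^1\), and \((J_A)_+=J_A=AJ_A^1\). If \(J_A=A\), then \((J_A)_+=A_+\), which is generated by \(A^1\) because \(A\) is quadratic.

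Cross terms vanish, so \(D\cdot(J_A^1\oplus J_B^1)=AJ_A^1\oplus BJ_B^1\). This equals \((J_A)_+\oplus(J_B)_+\) in all cases. Hence \(I:(x)\in\mathcal L(D)\), and Lemma~\ref{lem:colon-criterion} finishes the proof.

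The main obstacle is Step 2, in particular the degree-two bookkeeping. There \(V\) is not a direct sum of its projections, while \(I^2\) is. The key observation is that the degree-one component \(\lambda\) is killed because \(x\notin V\), and from degree two onward everything decouples.
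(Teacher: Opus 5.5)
Your proof is correct. The paper gives no argument of its own here: the statement is imported from \cite[Proposition~30]{MinacPalaistiPasiniTan}. So what you have written is a self-contained verification through Lemma~\ref{lem:colon-criterion}, and each step checks out.
\begin{itemize}
\item One has $I^1=V$.
\item For $n\ge2$, $I^n=(I_A')^n\oplus(I_B')^n$, because $A_+B^1=B_+A^1=0$.
\item The degree-one component $\lambda x$ of $cx$ forces $\lambda=0$, since $x\notin V$.
\item The identity $A_+=AA^1$ handles the case $J_A=A$. This occurs exactly when $a\in\pi_A(V)$, in particular when $a=0$.
\end{itemize}
The presentation of $A\sqcap B$ you write down is also right, since every mixed monomial in $T(A^1\oplus B^1)$ contains an adjacent mixed pair and hence lies in the relation ideal.

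It is worth seeing why this cannot simply be read off from the paper's Proposition~\ref{prop:formal-criterion} with $C=\Fp$, even though $A\sqcap B\cong A\times_{\Fp}B$. For $I_i\in\mathcal L(A_i)$ the augmentation images are $0$, so $I_1\times_0 I_2=I_1\oplus I_2$ has a split degree-one part $I_1^1\oplus I_2^1$. An ideal such as $(A\sqcap B)(a+b)$ with $a\neq0\neq b$ is not of this form, so hypothesis~\ref{formal-decomp} of Proposition~\ref{prop:formal-criterion} fails for direct sums. You observe that a non-split $V$ matters only in degree one, where it is absorbed by the condition $\lambda=0$. From degree two on, $I$ splits into an $A$-part and a $B$-part, and $I:(x)=(J_A)_+\oplus(J_B)_+$ is split in every degree. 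This is precisely the ingredient the formal fibre-product criterion lacks, and it is why the direct-sum case must be treated separately, as the paper does by citation.
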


By iteration, the direct sum of any finite family of universally
Koszul algebras is universally Koszul.

\subsection{Fibre products}

Let
\[
\varphi_i:A_i\twoheadrightarrow C,\qquad i=1,2,
\]
be surjective graded algebra maps. Their fibre product is
\[
A=A_1\times_C A_2
=
\{(a_1,a_2)\in A_1\oplus A_2:
\varphi_1(a_1)=\varphi_2(a_2)\}.
\]
It is a connected graded algebra under componentwise multiplication.

If \(I_i\subseteq A_i\) are left ideals and \(J\subseteq C\) is a left
ideal satisfying
\[
\varphi_i(I_i)=J,\qquad i=1,2,
\]
we write
\[
I_1\times_J I_2
=
\{(a_1,a_2)\in A:
a_i\in I_i\}.
\]

\section{A general fibre-product criterion}
\label{sec:generalcriterion}

The criterion in this section is intentionally formal. Its purpose
is to identify the precise colon compatibility needed in a completely
general fibre product. The structural results of
Section~\ref{sec:structural} will then replace these hypotheses by
concrete sufficient conditions.

\begin{lemma}
\label{lem:fibre-colon}
Let \(A=A_1\times_C A_2\), and let
\[
I=I_1\times_J I_2
\]
where \(I_i\subseteq A_i\) and \(J\subseteq C\) are left ideals with
\(\varphi_i(I_i)=J\). Let
\[
x=(x_1,x_2)\in A^1,\qquad
\bar x=\varphi_1(x_1)=\varphi_2(x_2)\in C^1.
\]
Then
\begin{equation}\label{eq:fibre-colon}
I:(x)
=
\{(a_1,a_2)\in A:
a_1\in I_1:(x_1),\
a_2\in I_2:(x_2)\}.
\end{equation}
Moreover, if
\[
L_i=I_i:(x_i),
\qquad
L_C=J:(\bar x),
\]
then
\[
\varphi_i(L_i)\subseteq L_C.
\]
If equality holds for both \(i=1,2\), then
\[
I:(x)=L_1\times_{L_C}L_2.
\]
\end{lemma}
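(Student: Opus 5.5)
The argument is a direct unwinding of definitions, done componentwise. For \eqref{eq:fibre-colon}, take \(a=(a_1,a_2)\in A\). Multiplication in \(A\) is componentwise, so \(ax=(a_1x_1,a_2x_2)\), and this element lies in \(A\) automatically. By the definition of \(I_1\times_J I_2\), membership \(ax\in I\) means exactly \(a_1x_1\in I_1\) and \(a_2x_2\in I_2\). That is, \(a_1\in I_1:(x_1)\) and \(a_2\in I_2:(x_2)\), which gives the displayed formula.

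For the inclusion \(\varphi_i(L_i)\subseteq L_C\), let \(a_i\in L_i\). Then \(a_ix_i\in I_i\). Since \(\varphi_i\) is an algebra map,
\[
\varphi_i(a_i)\bar x=\varphi_i(a_ix_i)\in\varphi_i(I_i)=J,
\]
so \(\varphi_i(a_i)\in J:(\bar x)=L_C\).

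For the last claim, assume \(\varphi_i(L_i)=L_C\) for \(i=1,2\). Then \(L_1\times_{L_C}L_2\) is defined, with the same conventions as \(I_1\times_J I_2\). By definition it is \(\{(a_1,a_2)\in A: a_i\in L_i\}\), which is exactly the right-hand side of \eqref{eq:fibre-colon}. Each \(L_i\) is a left ideal and \(L_C\) is a left ideal of \(C\), since colon ideals of left ideals by an element are left ideals. So the notation is legitimate, and the equality follows.

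There is no real obstacle here. The only point needing care is bookkeeping: the fibre-product notation \(L_1\times_{L_C}L_2\) was introduced only under the surjectivity hypothesis \(\varphi_i(L_i)=L_C\). That is why the equality hypothesis is needed, even though the underlying set equality holds regardless. I would also remark that the colon is taken inside \(A\), so pairs with \(a_i\) in the colons but \(\varphi_1(a_1)\neq\varphi_2(a_2)\) are excluded. This is already built into both descriptions.
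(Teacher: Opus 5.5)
Your proposal is correct and follows the paper's proof essentially line for line: componentwise multiplication gives the colon formula, the fact that \(\varphi_i\) is an algebra map gives \(\varphi_i(L_i)\subseteq L_C\), and when the images are equal the right-hand side is by definition \(L_1\times_{L_C}L_2\). Your extra check that the \(L_i\) and \(L_C\) are left ideals, so that the fibre-product notation is legitimate, is a small piece of bookkeeping that the paper leaves implicit.
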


\begin{proof}
Multiplication in \(A\) is componentwise:
\[
(a_1,a_2)(x_1,x_2)=(a_1x_1,a_2x_2).
\]
The product lies in \(I\) if and only if
\(a_ix_i\in I_i\) for \(i=1,2\), which proves
\eqref{eq:fibre-colon}.

If \(a_i\in L_i\), then \(a_ix_i\in I_i\), hence
\[
\varphi_i(a_i)\bar x
=
\varphi_i(a_ix_i)
\in
\varphi_i(I_i)
=
J.
\]
Thus \(\varphi_i(a_i)\in J:(\bar x)=L_C\), proving
\(\varphi_i(L_i)\subseteq L_C\). If the images are equal, then the
right-hand side of \eqref{eq:fibre-colon} is exactly the indicated
fibre product.
\end{proof}

\begin{proposition}
\label{prop:formal-criterion}
Let \(A=A_1\times_C A_2\), where \(A,A_1,A_2,C\) are quadratic
algebras generated in degree one. Assume:
\begin{enumerate}[label=\textnormal{(\roman*)}]
\item\label{formal-decomp}
For every \(I\in\mathcal L(A)\), there exist
\(I_i\in\mathcal L(A_i)\) and \(J\in\mathcal L(C)\) such that
\[
\varphi_1(I_1)=\varphi_2(I_2)=J,
\qquad
I=I_1\times_JI_2.
\]

\item\label{formal-closure}
Whenever
\[
L_i\in\widehat{\mathcal L}(A_i),
\qquad
L_C\in\widehat{\mathcal L}(C),
\]
satisfy
\[
\varphi_1(L_1)=\varphi_2(L_2)=L_C,
\]
the fibre product \(L_1\times_{L_C}L_2\) belongs to
\(\widehat{\mathcal L}(A)\).

\item\label{formal-colonimage}
For every decomposition in \ref{formal-decomp} and every
\(x=(x_1,x_2)\in A^1\), with common image
\(\bar x\in C^1\), one has
\[
\varphi_i\bigl(I_i:(x_i)\bigr)=J:(\bar x),
\qquad i=1,2.
\]
\end{enumerate}
If \(A_1,A_2,C\) are universally Koszul, then \(A\) is universally
Koszul.
\end{proposition}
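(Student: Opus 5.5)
The plan is to verify the colon criterion of Lemma~\ref{lem:colon-criterion} for \(A\) directly. Fix \(I\in\mathcal L(A)\) and \(x\in A^1\setminus I^1\). We must show \(I:(x)\in\mathcal L(A)\). By hypothesis~\ref{formal-decomp}, write \(I=I_1\times_J I_2\) with \(I_i\in\mathcal L(A_i)\), \(J\in\mathcal L(C)\), and \(\varphi_1(I_1)=\varphi_2(I_2)=J\). Write \(x=(x_1,x_2)\) with common image \(\bar x\). Set \(L_i=I_i:(x_i)\) and \(L_C=J:(\bar x)\).

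Since \(A_1,A_2,C\) are universally Koszul, the remark following Lemma~\ref{lem:colon-criterion} gives \(L_i\in\widehat{\mathcal L}(A_i)\) and \(L_C\in\widehat{\mathcal L}(C)\). This holds whether or not \(x_i\in I_i\) (respectively \(\bar x\in J\)); if it does, the colon is the whole algebra. Hypothesis~\ref{formal-colonimage} gives \(\varphi_i(L_i)=L_C\) for \(i=1,2\). The equality clause of Lemma~\ref{lem:fibre-colon} then yields \(I:(x)=L_1\times_{L_C}L_2\). Hypothesis~\ref{formal-closure} places this fibre product in \(\widehat{\mathcal L}(A)\).

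It remains to exclude \(I:(x)=A\), and this is the only point needing care. If \(I:(x)=A\), then \(1\in I:(x)\), so \(x=1\cdot x\in I\). Since \(x\) is homogeneous of degree one, \(x\in I^1\), contradicting the choice of \(x\). Hence \(I:(x)\in\mathcal L(A)\), and Lemma~\ref{lem:colon-criterion} shows \(A\) is universally Koszul.

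The main obstacle is only bookkeeping: the hypotheses must be applied in the \(\widehat{\mathcal L}\) setting, because a component colon may be the whole algebra even though \(x\notin I\). For example, \(x_1\in I_1\) can hold while \(x_2\notin I_2\). This is exactly why the statement uses \(\widehat{\mathcal L}\), and the final degree argument recovers membership in \(\mathcal L(A)\) itself. One should also note that \(A\) is quadratic by assumption, so Lemma~\ref{lem:colon-criterion} applies to it.
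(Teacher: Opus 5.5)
Your proposal is correct and follows the paper's own proof essentially step for step. You decompose \(I\) via (i), place the component colons in \(\widehat{\mathcal L}\) by universal Koszulity, and identify \(I:(x)\) as a fibre product via (iii) and Lemma~\ref{lem:fibre-colon}. You then land in \(\widehat{\mathcal L}(A)\) by (ii) and rule out \(I:(x)=A\) because \(1\in I:(x)\) would force \(x\in I\).
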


\begin{proof}
Let \(I\in\mathcal L(A)\) and
\(x=(x_1,x_2)\in A^1\setminus I^1\). By
\ref{formal-decomp}, write
\[
I=I_1\times_JI_2
\]
with \(I_i\in\mathcal L(A_i)\), \(J\in\mathcal L(C)\), and common
image \(J\).

Set
\[
L_i=I_i:(x_i),
\qquad
L_C=J:(\bar x).
\]
Because \(A_i\) is universally Koszul, either
\(x_i\notin I_i\) and Lemma~\ref{lem:colon-criterion} gives
\(L_i\in\mathcal L(A_i)\), or \(x_i\in I_i\) and
\(L_i=A_i\). Hence
\[
L_i\in\widehat{\mathcal L}(A_i).
\]
The same argument gives
\[
L_C\in\widehat{\mathcal L}(C).
\]

By \ref{formal-colonimage} and Lemma~\ref{lem:fibre-colon},
\[
I:(x)=L_1\times_{L_C}L_2.
\]
Hypothesis~\ref{formal-closure} therefore gives
\[
I:(x)\in\widehat{\mathcal L}(A).
\]
But \(I:(x)\neq A\), because
\(1\in I:(x)\) would imply \(x\in I\), contrary to the choice of
\(x\). Thus
\[
I:(x)\in\mathcal L(A).
\]
Lemma~\ref{lem:colon-criterion} now implies that \(A\) is universally
Koszul.
\end{proof}

\begin{remark}
Proposition~\ref{prop:formal-criterion} separates the three ingredients of
the general pullback problem: decomposition of degree-one-generated ideals,
closure of compatible pullbacks, and equality of colon images. The
structural theorems below supply these properties through explicit algebraic
or combinatorial models.
\end{remark}

\section{Structural fibre products}
\label{sec:structural}

\subsection{Orthogonally split fibre products}

The first structural class reduces a fibre product to the direct-sum
operation of Theorem~\ref{thm:directsum-uk}.

\begin{definition}
\label{def:orthsplit}
A surjective graded algebra map
\[
\varphi:A\twoheadrightarrow C
\]
is \emph{orthogonally split} if there exists a connected graded algebra
\(B\) and an isomorphism
\[
A\cong C\sqcap B
\]
under which \(\varphi\) becomes the canonical projection
\[
C\sqcap B\twoheadrightarrow C.
\]
\end{definition}

\begin{theorem}
\label{thm:orthsplit}
Suppose
\[
\varphi_i:A_i\twoheadrightarrow C,\qquad i=1,2,
\]
are orthogonally split, with
\[
A_i\cong C\sqcap B_i.
\]
Then there is a natural graded-algebra isomorphism
\[
A_1\times_C A_2
\cong
C\sqcap B_1\sqcap B_2.
\]
Consequently, if \(C,B_1,B_2\) are universally Koszul, then
\(A_1\times_C A_2\) is universally Koszul.
\end{theorem}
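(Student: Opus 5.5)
Since the statement is only about the isomorphism type of the fibre product, I may first replace \(A_i\) by \(C\sqcap B_i\) and \(\varphi_i\) by the canonical projection \(\pi_i\colon C\sqcap B_i\twoheadrightarrow C\). Fibre products are invariant under such isomorphisms over \(C\), so it suffices to treat this model. As graded vector spaces, \(C\sqcap B_i=\Fp\oplus C_+\oplus (B_i)_+\), and \(\pi_i\) is the identity on \(\Fp\oplus C_+\) and kills \((B_i)_+\).

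Next I define an explicit map
\[
\Phi\colon C\sqcap B_1\sqcap B_2\longrightarrow (C\sqcap B_1)\times_C(C\sqcap B_2),
\qquad
\lambda+c+b_1+b_2\longmapsto (\lambda+c+b_1,\ \lambda+c+b_2),
\]
with \(\lambda\in\Fp\), \(c\in C_+\), \(b_i\in(B_i)_+\). Both components have image \(\lambda+c\) in \(C\), so \(\Phi\) lands in the fibre product. It is graded and linear.

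For bijectivity, take \((a_1,a_2)\) in the fibre product and write \(a_i=\lambda_i+c_i+b_i\). The condition \(\pi_1(a_1)=\pi_2(a_2)\) says \(\lambda_1+c_1=\lambda_2+c_2\). Graded components of \(\Fp\oplus C_+\) are independent, so \(\lambda_1=\lambda_2\) and \(c_1=c_2\). Hence \((a_1,a_2)=\Phi(\lambda_1+c_1+b_1+b_2)\), and this preimage is clearly unique.

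Multiplicativity is the one step needing care, though it remains a finite check on the pieces \(C_+\), \((B_1)_+\), \((B_2)_+\), using componentwise multiplication in the fibre product.
\begin{itemize}
\item Products within \(C_+\) or within one \((B_i)_+\) agree in both algebras.
\item Products \(C_+(B_i)_+\) and \((B_i)_+C_+\) vanish in the source, and vanish in each component \(C\sqcap B_j\) by Definition~\ref{def:directsum}.
\item For a mixed product \(b_1b_2\), \(\Phi(b_1)=(b_1,0)\) and \(\Phi(b_2)=(0,b_2)\), so the product is \((0,0)\). This matches \(b_1b_2=0\) in \(C\sqcap B_1\sqcap B_2\).
\end{itemize}
Here I read the iterated direct sum as \((C\sqcap B_1)\sqcap B_2\), whose positive part is \(C_+\oplus(B_1)_+\oplus(B_2)_+\) with all cross products zero. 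Naturality of \(\Phi\) in \(C,B_1,B_2\) is evident from the formula.

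Finally, for universal Koszulity I apply Theorem~\ref{thm:directsum-uk} twice. First, \(C\sqcap B_1\) is universally Koszul; then so is \((C\sqcap B_1)\sqcap B_2\), which is the iterated direct sum remarked after that theorem. Universal Koszulity is invariant under graded isomorphism, since quadraticity and the collection \(\mathcal L\) are transported by the isomorphism. Therefore \(A_1\times_CA_2\) is universally Koszul.
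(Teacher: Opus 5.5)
Your proposal is correct and is essentially the paper's proof. You use the same decomposition \(\lambda+c+b_i\) of elements of \(C\sqcap B_i\), the same bijection (you write it as \(C\sqcap B_1\sqcap B_2\to A_1\times_C A_2\), the paper writes its inverse), the vanishing of cross products for multiplicativity, and repeated application of Theorem~\ref{thm:directsum-uk}; your piece-by-piece multiplicativity check and your remark that replacing \(A_i\) by \(C\sqcap B_i\) is an isomorphism over \(C\) only spell out steps the paper leaves brief.
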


\begin{proof}
Under the given splittings, an element of \(A_i\) can be written
uniquely as
\[
\lambda+c_i+b_i,
\qquad
\lambda\in\Fp,\quad c_i\in C_+,\quad b_i\in(B_i)_+,
\]
and the projection to \(C\) sends it to \(\lambda+c_i\). A pair lies
in the fibre product precisely when the scalar and \(C_+\)-components
agree. Hence every element of the fibre product is uniquely of the
form
\[
\bigl((\lambda+c)+b_1,(\lambda+c)+b_2\bigr).
\]
The map
\[
\bigl((\lambda+c)+b_1,(\lambda+c)+b_2\bigr)
\longmapsto
(\lambda+c)+b_1+b_2
\]
is therefore a graded vector-space isomorphism onto
\(C\sqcap B_1\sqcap B_2\).

Multiplication is preserved because all products between positive
degree elements belonging to distinct direct-sum factors vanish.
Thus the map is a graded-algebra isomorphism. The final assertion
follows by repeated application of
Theorem~\ref{thm:directsum-uk}.
\end{proof}

\begin{remark}
For orthogonally split maps, the fibre product is determined entirely by the
three direct-sum factors. The colon compatibilities of the general criterion
are therefore consequences of the splitting.
\end{remark}

\subsection{Exterior Stanley--Reisner fibre products}

Let \(\Gamma\) be a finite simple graph with vertex set \(V(\Gamma)\).
Define its exterior Stanley--Reisner algebra over \(\Fp\) by
\[
A_\Gamma
=
\Lambda_{\Fp}(x_v:v\in V(\Gamma))
\Big/
\bigl(x_vx_w:\{v,w\}\notin E(\Gamma)\bigr).
\]
A vector-space basis is given by square-free monomials
\[
x_{v_1}\cdots x_{v_r}
\]
whose vertex set is a clique of \(\Gamma\).

If \(\Gamma_0\) is an induced subgraph of \(\Gamma\), killing the
variables outside \(V(\Gamma_0)\) gives a natural surjective map
\[
q_{\Gamma,\Gamma_0}:A_\Gamma\twoheadrightarrow A_{\Gamma_0}.
\]

\begin{proposition}
\label{prop:graph-pullback}
Let \(\Gamma_1,\Gamma_2\) be finite simple graphs with a common induced
subgraph \(\Gamma_0\). Assume
\[
V(\Gamma_1)\cap V(\Gamma_2)=V(\Gamma_0),
\]
and let \(\Gamma\) be the graph with
\[
V(\Gamma)=V(\Gamma_1)\cup V(\Gamma_2),
\qquad
E(\Gamma)=E(\Gamma_1)\cup E(\Gamma_2).
\]
Thus there are no edges joining
\(V(\Gamma_1)\setminus V(\Gamma_0)\) to
\(V(\Gamma_2)\setminus V(\Gamma_0)\). Then
\[
A_\Gamma
\cong
A_{\Gamma_1}\times_{A_{\Gamma_0}}A_{\Gamma_2}
\]
with respect to the natural quotient maps.
\end{proposition}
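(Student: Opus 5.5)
The plan is to write down the obvious comparison map and check that it is bijective using the monomial bases. Since \(\Gamma_1\) and \(\Gamma_2\) are induced subgraphs of \(\Gamma\), there are quotient maps
\[
q_{\Gamma,\Gamma_i}:A_\Gamma\twoheadrightarrow A_{\Gamma_i},
\]
each killing the variables outside \(V(\Gamma_i)\). The compositions with \(q_{\Gamma_i,\Gamma_0}\) both equal \(q_{\Gamma,\Gamma_0}\), because both kill exactly the variables outside \(V(\Gamma_0)\). The universal property of the pullback therefore gives a graded algebra map
\[
\Psi:A_\Gamma\longrightarrow A_{\Gamma_1}\times_{A_{\Gamma_0}}A_{\Gamma_2},\qquad a\mapsto\bigl(q_{\Gamma,\Gamma_1}(a),q_{\Gamma,\Gamma_2}(a)\bigr).
\]
Multiplicativity of \(\Psi\) is automatic, since the fibre product has componentwise multiplication. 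Everything then reduces to showing that \(\Psi\) is a linear bijection in each degree.

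\textbf{Key combinatorial input.} Every clique \(\sigma\) of \(\Gamma\) lies entirely in \(V(\Gamma_1)\) or entirely in \(V(\Gamma_2)\). Indeed, if \(\sigma\) contained \(v\in V(\Gamma_1)\setminus V(\Gamma_0)\) and \(w\in V(\Gamma_2)\setminus V(\Gamma_0)\), then \(\{v,w\}\) would be an edge of \(\Gamma\), which is excluded by \(E(\Gamma)=E(\Gamma_1)\cup E(\Gamma_2)\). Moreover, a clique of \(\Gamma\) contained in \(V(\Gamma_i)\) is a clique of \(\Gamma_i\): its edges lie in \(E(\Gamma)\cap\binom{V(\Gamma_i)}{2}=E(\Gamma_i)\). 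This uses that \(\Gamma_0\) is induced in both \(\Gamma_1\) and \(\Gamma_2\), so an edge of \(\Gamma_j\) with both ends in \(V(\Gamma_i)\) lies in \(\Gamma_0\subseteq\Gamma_i\). Hence the cliques of \(\Gamma\) are exactly \(\mathrm{Cl}(\Gamma_1)\cup\mathrm{Cl}(\Gamma_2)\), and
\[
\mathrm{Cl}(\Gamma_1)\cap\mathrm{Cl}(\Gamma_2)=\mathrm{Cl}(\Gamma_0).
\]

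\textbf{Bijectivity.} On the basis, \(q_{\Gamma,\Gamma_i}\) sends the monomial \(x_\sigma\) to \(x_\sigma\) if \(\sigma\subseteq V(\Gamma_i)\), and to \(0\) otherwise. Choose the sign convention for \(x_\sigma\) via a fixed total order on \(V(\Gamma)\), so that the signs agree across all four algebras.

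\emph{Injectivity.} Take \(a=\sum_\sigma c_\sigma x_\sigma\in\ker\Psi\). Each \(\sigma\) lies in some \(V(\Gamma_i)\), so the coefficient \(c_\sigma\) is detected by \(q_{\Gamma,\Gamma_i}(a)=0\). Hence all \(c_\sigma=0\).

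\emph{Surjectivity.} Take \((a_1,a_2)\) in the fibre product and write \(a_i=\sum_{\sigma\in\mathrm{Cl}(\Gamma_i)}c^{(i)}_\sigma x_\sigma\). The condition
\[
q_{\Gamma_1,\Gamma_0}(a_1)=q_{\Gamma_2,\Gamma_0}(a_2)
\]
says exactly that \(c^{(1)}_\sigma=c^{(2)}_\sigma\) for every \(\sigma\in\mathrm{Cl}(\Gamma_0)\). The coefficients therefore define a consistent function on \(\mathrm{Cl}(\Gamma_1)\cup\mathrm{Cl}(\Gamma_2)=\mathrm{Cl}(\Gamma)\). The corresponding element \(a\in A_\Gamma\) then satisfies \(\Psi(a)=(a_1,a_2)\).

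The only step needing care is the identification of cliques, specifically that \(\Gamma_i\) is an induced subgraph of \(\Gamma\) (so that \(q_{\Gamma,\Gamma_i}\) is defined and the basis statement holds). This is where the hypothesis \(V(\Gamma_1)\cap V(\Gamma_2)=V(\Gamma_0)\), with \(\Gamma_0\) induced in both graphs, is used. Everything else is bookkeeping with the square-free monomial basis.
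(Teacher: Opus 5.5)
Your proof is correct and follows the same route as the paper. Both arguments build $\Psi$ from the two quotient maps, observe that every clique of $\Gamma$ lies in $\Gamma_1$ or $\Gamma_2$ with the common cliques being exactly those of $\Gamma_0$, and match clique-monomial coefficients; your explicit check that each $\Gamma_i$ is induced in $\Gamma$ (using that $\Gamma_0$ is induced in both pieces) and your fixed vertex order for signs supply details the paper leaves implicit.
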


\begin{proof}
The quotient maps
\[
A_\Gamma\twoheadrightarrow A_{\Gamma_i}
\]
obtained by killing variables outside \(V(\Gamma_i)\) induce a homomorphism
\[
\Psi:A_\Gamma\longrightarrow
A_{\Gamma_1}\times_{A_{\Gamma_0}}A_{\Gamma_2}.
\]
Every clique of \(\Gamma\) lies in \(\Gamma_1\) or in \(\Gamma_2\),
because there are no edges between the two exclusive vertex sets. A clique
lying in both pieces is precisely a clique of \(\Gamma_0\).

Hence a compatible pair in the fibre product has a unique expression in
which the coefficients of the clique monomials supported on \(\Gamma_0\)
agree, while the remaining clique monomials are supported in exactly one of
the two pieces. Sending this compatible pair to the same linear combination
of clique monomials in \(A_\Gamma\) defines an inverse to \(\Psi\).
Therefore \(\Psi\) is a graded-algebra isomorphism.
\end{proof}

\begin{definition}[Diagonal and dominating clique]
\label{def:diagonal-dominating}
A finite graph is \emph{diagonal} if it has no induced subgraph
isomorphic to \(P_4\) or \(C_4\).

Let \(\Gamma_0\) be an induced subgraph of \(\Gamma\). We say that
\(\Gamma_0\) is a \emph{dominating clique} in \(\Gamma\) if
\(\Gamma_0\) is complete and every vertex of \(V(\Gamma_0)\) is
adjacent to every vertex of
\(V(\Gamma)\setminus V(\Gamma_0)\).
\end{definition}

\begin{theorem}
\label{thm:diagonal-gluing}
In the setting of Proposition~\ref{prop:graph-pullback}, assume in
addition that
\begin{enumerate}[label=\textnormal{(\alph*)}]
\item \(\Gamma_1\) and \(\Gamma_2\) are diagonal;
\item \(\Gamma_0\) is a dominating clique in both
\(\Gamma_1\) and \(\Gamma_2\).
\end{enumerate}
Then \(\Gamma\) is diagonal.
\end{theorem}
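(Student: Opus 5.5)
The plan is a direct argument by contradiction: suppose \(\Gamma\) contains an induced subgraph \(H\) on four vertices isomorphic to \(P_4\) or \(C_4\), and show that \(H\) must lie inside one of the pieces \(\Gamma_i\).

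The first step is to check that each \(\Gamma_i\) is an induced subgraph of \(\Gamma\). Take two vertices of \(V(\Gamma_1)\) joined by an edge of \(\Gamma_2\). Both vertices then lie in \(V(\Gamma_1)\cap V(\Gamma_2)=V(\Gamma_0)\). Since \(\Gamma_0\) is induced in both \(\Gamma_1\) and \(\Gamma_2\), that edge already belongs to \(E(\Gamma_1)\), and the same holds with the roles of \(1\) and \(2\) exchanged. Consequently, if \(V(H)\subseteq V(\Gamma_i)\), then \(H\) is an induced \(P_4\) or \(C_4\) in \(\Gamma_i\), contradicting hypothesis (a).

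The second step is to show that \(H\) has no vertex in \(V(\Gamma_0)\). Hypothesis (b), applied in both \(\Gamma_1\) and \(\Gamma_2\), says that every \(v\in V(\Gamma_0)\) is adjacent to all other vertices of \(\Gamma_0\) (completeness) and to all of \(V(\Gamma_i)\setminus V(\Gamma_0)\) for \(i=1,2\). Hence \(v\) is adjacent in \(\Gamma\) to every other vertex of \(\Gamma\). If \(v\in V(H)\), then \(v\) would have degree \(3\) in \(H\), because \(H\) is induced. This is impossible, since \(P_4\) and \(C_4\) have maximum degree \(2\).

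It follows that \(V(H)\) is contained in the disjoint union
\[
\bigl(V(\Gamma_1)\setminus V(\Gamma_0)\bigr)\sqcup\bigl(V(\Gamma_2)\setminus V(\Gamma_0)\bigr).
\]
By the first step, \(H\) meets both parts. There are no edges of \(\Gamma\) between the two parts, as noted in Proposition~\ref{prop:graph-pullback}, so \(H\) is disconnected. This contradicts the connectedness of \(P_4\) and \(C_4\), and therefore \(\Gamma\) is diagonal.

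I do not expect a genuine obstacle. The only point needing care is the second step: one must read ``dominating clique'' so that vertices of \(\Gamma_0\) are universal in all of \(\Gamma\). This uses the definition in each \(\Gamma_i\) separately, together with the completeness of \(\Gamma_0\).
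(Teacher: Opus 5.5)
Your proof is correct and uses the same ingredients as the paper's proof: containment in a single \(\Gamma_i\) contradicts (a), the absence of cross edges combined with connectedness of \(P_4\) and \(C_4\), and the degree-\(3\) contradiction coming from domination. The paper applies them in the reverse order (it uses connectedness to force a \(\Gamma_0\)-vertex into the induced subgraph and then rules that vertex out by degree), and your explicit check that each \(\Gamma_i\) is an induced subgraph of \(\Gamma\) supplies a point the paper uses without stating it.
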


\begin{proof}
Suppose, toward a contradiction, that \(\Gamma\) contains an induced
subgraph \(\Delta\) isomorphic to \(P_4\) or \(C_4\).

If all vertices of \(\Delta\) lie in one \(\Gamma_i\), then
\(\Gamma_i\) is not diagonal, contrary to hypothesis. Thus
\(\Delta\) contains at least one vertex from
\[
V(\Gamma_1)\setminus V(\Gamma_0)
\]
and at least one vertex from
\[
V(\Gamma_2)\setminus V(\Gamma_0).
\]
There are no edges between these two exclusive sets. Since both
\(P_4\) and \(C_4\) are connected, \(\Delta\) must therefore contain a
vertex \(v\in V(\Gamma_0)\).

By the dominating hypothesis, \(v\) is adjacent to every other vertex
of \(\Delta\): vertices in either exclusive piece are adjacent to
\(v\), and vertices of \(\Gamma_0\) are adjacent to \(v\) because
\(\Gamma_0\) is a clique. Thus \(v\) has degree \(3\) in the induced
four-vertex graph \(\Delta\). This is impossible in \(P_4\) or
\(C_4\), where every vertex has degree at most \(2\). Hence no such
\(\Delta\) exists, and \(\Gamma\) is diagonal.
\end{proof}

\begin{example}[A diagonal gluing without domination]
Let \(\Gamma_0\) consist of one vertex \(v\). Let \(\Gamma_1\) have
vertices \(v,a\) with the edge \(va\), and let \(\Gamma_2\) have vertices
\(v,b\) with no edge. Both pieces are diagonal, while \(\Gamma_0\) is not
dominating in \(\Gamma_2\). Their gluing is an edge together with an
isolated vertex, hence is diagonal.
\end{example}

\begin{example}[A gluing that creates an induced \(P_4\)]
Let \(\Gamma_0\) be the edge on vertices \(v,w\). Let \(\Gamma_1\) be
the path \(a-v-w\), and let \(\Gamma_2\) be the path \(v-w-b\). Each
piece is diagonal, but their gluing is the induced path
\[
a-v-w-b,
\]
which is \(P_4\). Thus preservation of diagonality requires compatibility
between the common subgraph and the two exclusive pieces.
\end{example}

\begin{theorem}[{\cite[Theorem~4.6]{CassellaQuadrelli}}]
\label{thm:CQ}
For a finite graph \(\Gamma\), the exterior Stanley--Reisner algebra
\(A_\Gamma\) is universally Koszul if and only if \(\Gamma\) is
diagonal.
\end{theorem}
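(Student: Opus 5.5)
The plan is to reprove the characterization directly, using the colon criterion of Lemma~\ref{lem:colon-criterion} and the direct-sum stability of Theorem~\ref{thm:directsum-uk}. The guiding input is the standard structural description of \(\{P_4,C_4\}\)-free graphs, namely trivially perfect (quasi-threshold) graphs. Every finite diagonal graph is obtained from single vertices by two operations:
\begin{enumerate}[label=\textnormal{(\arabic*)}]
\item the disjoint union \(\Gamma'\sqcup\Gamma''\);
\item the cone \(\Gamma'*v\), which adjoins a vertex \(v\) adjacent to all of \(\Gamma'\).
\end{enumerate}
I would first recall or reprove this decomposition. The argument is that each connected component of a diagonal graph has a vertex adjacent to all others in the component, found by taking a vertex of maximal degree and using the absence of an induced \(P_4\). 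Sufficiency then goes by induction on \(|V(\Gamma)|\), and necessity by exhibiting explicit failing colon ideals.

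\emph{Sufficiency.} For a disjoint union, \(A_{\Gamma'\sqcup\Gamma''}\cong A_{\Gamma'}\sqcap A_{\Gamma''}\), since products of generators from different components vanish. Theorem~\ref{thm:directsum-uk} then applies. This is also the special case \(\Gamma_0=\emptyset\) of Proposition~\ref{prop:graph-pullback}. For a cone, \(A_{\Gamma'*v}\cong A_{\Gamma'}\otimes\Lambda(x_v)\) as a graded-commutative tensor product. Here I would verify directly that tensoring with an exterior algebra on one generator preserves universal Koszulity, again via Lemma~\ref{lem:colon-criterion}. Write an element as \(a+b\,x_v\) with \(a,b\in A_{\Gamma'}\), and a degree-one ideal as \(I=(U)\) with \(U\subseteq A^1_{\Gamma'}\oplus\Fp x_v\). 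Split into two cases: either \(U\subseteq A^1_{\Gamma'}\), or \(U\) contains an element \(u+x_v\). In the second case, the automorphism of \(A_{\Gamma'}\otimes\Lambda(x_v)\) sending \(x_v\mapsto x_v+u\) lets us assume \(x_v\in U\). That is legitimate because \((x_v+u)^2=0\) and \(x_v+u\) commutes correctly with \(A_{\Gamma'}\). The colon \(I:(x)\) can then be computed componentwise in the \(x_v\)-grading. It reduces to colon ideals \(I':(x')\) in \(A_{\Gamma'}\), to quotients \(A_{\Gamma'}/I'\), and to \(x_v\) itself, so it is generated in degree one by the inductive hypothesis.

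\emph{Necessity.} First, if \(\Delta\subseteq\Gamma\) is an induced subgraph, then \(A_\Delta\) is an algebra retract of \(A_\Gamma\). The inclusion on clique monomials splits the quotient \(q_{\Gamma,\Delta}\). Degree-one ideals and their colons can be transported along the retraction: for \(I\) and \(x\) in \(A_\Delta\), one has \(q\bigl((I A_\Gamma):(x)\bigr)=I:(x)\). So it suffices to show that \(A_{P_4}\) and \(A_{C_4}\) are not universally Koszul. For each of these I would search for an ideal \(I\in\mathcal L\) and an \(x\in A^1\setminus I^1\) whose colon \(I:(x)\) has a minimal generator in degree \(2\). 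For \(P_4\) with vertices \(a-b-c-d\), my first candidates are \(I=0\) with \(x=x_b+x_c\), or \(I=(x_a)\) with \(x=x_c+x_d\). For \(C_4\), I would try ideals generated by a sum of two nonadjacent variables, with \(x\) chosen among the remaining mixed linear forms.

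The main obstacle I expect is this last explicit computation. The first naive choices for \(C_4\) fail, because every degree-two clique monomial already contains \(x_a\) or \(x_c\). Finding the right witness therefore needs a short case search over generic linear forms, tracking the degree-two and degree-three parts of the colon. If it resists, I would fall back on citing \cite[Theorem~4.6]{CassellaQuadrelli} for necessity. Only sufficiency, via the cone step, is actually used later in the paper.
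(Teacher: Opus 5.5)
\textbf{Gap: the ``only if'' direction is not proved.} Your retract argument reducing to induced subgraphs is correct. The two base cases, however, are left open. For $C_4$ you give no witness. For $P_4$ neither of your candidates is one: with the path $a-b-c-d$ one computes $A^1(x_b+x_c)=\Span(x_ax_b,x_bx_c,x_cx_d)=A^2$, so $0:(x_b+x_c)=(x_b+x_c)$. Likewise $(x_a):(x_c+x_d)=(x_a,\,x_c+x_d)$, which is again generated in degree one. Falling back on \cite[Theorem~4.6]{CassellaQuadrelli} is not available, because that is the statement being proved; the paper gives no argument and only imports it. So the proposal does not prove the stated equivalence, even though, as you say, only the ``if'' direction is used later.

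\textbf{How to close it.} Both $P_4$ and $C_4$ are triangle-free, so $A^3=0$. Hence for $I=0$ one has $0:(x)=\operatorname{Ann}(x)^1\oplus A^2$, and this lies in $\mathcal L(A)$ exactly when $A^1\cdot\operatorname{Ann}(x)^1=A^2$.
\begin{itemize}
\item For $P_4$, take the two endpoints, $x=x_a+x_d$. Then $\operatorname{Ann}(x)^1=\Span(x_a,x_d)$, whose $A^1$-multiples span only $x_ax_b$ and $x_cx_d$. So $x_bx_c$ is a minimal generator of degree two.
\item For $C_4$ (the cycle $a-b-c-d-a$), take two \emph{adjacent} vertices, $x=x_a+x_b$, rather than the nonadjacent pairs you tried. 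Then $\operatorname{Ann}(x)^1=\Fp(x_a+x_b)$, and $A^1(x_a+x_b)=\Span(x_ax_b,x_bx_c,x_ax_d)$ misses $x_cx_d$.
\end{itemize}
With these witnesses, your retract argument completes necessity.

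\textbf{Sufficiency.} This half is sound, with two small corrections.
\begin{itemize}
\item The dominating-vertex lemma needs $C_4$-freeness as well as $P_4$-freeness. In the maximal-degree argument, the configuration $z-v-u-w$ is either an induced $P_4$ or closes up to an induced $C_4$. Indeed $C_4$ itself is connected, $P_4$-free, and has no dominating vertex.
\item In the cone step, the case $U\subseteq A^1_{\Gamma'}$ with $x=x'+x_v$ is not computed componentwise in the $x_v$-grading, but it is immediate. Write $B=A_{\Gamma'}\otimes\Lambda(x_v)$ and $I'=A_{\Gamma'}U$; using $x'^2=0$ one gets $I:(x)=BI'+Bx$.
\end{itemize}
In the remaining cases: for $x=x'$ one gets $I:(x)=(I':(x'))\otimes\Lambda(x_v)$. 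When $U$ contains $u+x_v$, apply the automorphism $x_v\mapsto x_v-u$ to reach $x_v\in U$; then $I:(x)=B\,(I':(x'))+Bx_v$. Thus only universal Koszulity of $A_{\Gamma'}$ is needed, and the induction closes.
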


\begin{corollary}
\label{cor:graph-uk}
Under the hypotheses of Theorem~\ref{thm:diagonal-gluing},
\[
A_{\Gamma_1}\times_{A_{\Gamma_0}}A_{\Gamma_2}
\]
is universally Koszul.
\end{corollary}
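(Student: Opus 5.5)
The plan is to chain the three results just established: Proposition~\ref{prop:graph-pullback}, Theorem~\ref{thm:diagonal-gluing}, and the Cassella--Quadrelli characterization, Theorem~\ref{thm:CQ}.

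First, under the hypotheses of Theorem~\ref{thm:diagonal-gluing} we are in particular in the setting of Proposition~\ref{prop:graph-pullback}. That proposition gives a graded-algebra isomorphism
\[
A_{\Gamma_1}\times_{A_{\Gamma_0}}A_{\Gamma_2}\cong A_\Gamma ,
\]
where \(\Gamma\) is the glued graph. This isomorphism is compatible with the natural quotient maps \(q_{\Gamma_i,\Gamma_0}\). Universal Koszulity is defined intrinsically through the quadratic presentation and the family \(\mathcal L(-)\), so it is invariant under graded-algebra isomorphism. It therefore suffices to show that \(A_\Gamma\) is universally Koszul.

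Second, Theorem~\ref{thm:diagonal-gluing} says that \(\Gamma\) is diagonal, using hypotheses (a) and (b). The vertex set \(V(\Gamma)=V(\Gamma_1)\cup V(\Gamma_2)\) is finite, so \(\Gamma\) is a finite graph. Theorem~\ref{thm:CQ} then gives that \(A_\Gamma\) is universally Koszul, and transporting this along the isomorphism from the first step proves the corollary.

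There is no substantive obstacle here. The only point to check is the minor one that \(A_\Gamma\) is quadratic and generated in degree one, which holds by its defining presentation, so Definition~\ref{def:uk} applies to it.
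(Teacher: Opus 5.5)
Your proposal is correct and matches the paper's proof: identify the fibre product with \(A_\Gamma\) via Proposition~\ref{prop:graph-pullback}, obtain diagonality of \(\Gamma\) from Theorem~\ref{thm:diagonal-gluing}, and conclude with Theorem~\ref{thm:CQ}. Your remarks that universal Koszulity is invariant under graded-algebra isomorphism and that \(\Gamma\) is finite are bookkeeping the paper leaves implicit.
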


\begin{proof}
By Proposition~\ref{prop:graph-pullback}, the fibre product is
isomorphic to \(A_\Gamma\). By
Theorem~\ref{thm:diagonal-gluing}, \(\Gamma\) is diagonal, and
Theorem~\ref{thm:CQ} gives the conclusion.
\end{proof}

\begin{remark}
Corollary~\ref{cor:graph-uk} expresses universal Koszulity of the pullback in
terms of a combinatorial condition on the glued graph. The defining
quadratic relations are read directly from its nonedges.
\end{remark}

\section{Composita and Kummer gluing}
\label{sec:composita}

We now formulate field composita through the restriction maps supplied by
Galois theory.

\subsection{Contravariance and restriction images}

Let \(k\) be a field, let \(k^{\mathrm{sep}}\) be a fixed separable
closure, and let \(K_1,K_2\) be algebraic subextensions of
\(k^{\mathrm{sep}}/k\). Put
\[
K=K_1K_2.
\]
Then, inside \(G_k=\operatorname{Gal}(k^{\mathrm{sep}}/k)\),
\[
G_{K_i}=\operatorname{Gal}(k^{\mathrm{sep}}/K_i)
\]
and
\[
G_K=\operatorname{Gal}(k^{\mathrm{sep}}/K).
\]

\begin{proposition}
\label{prop:intersection}
With the notation above,
\[
G_K=G_{K_1}\cap G_{K_2}
\]
inside \(G_k\).
\end{proposition}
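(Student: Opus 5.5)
We prove the two inclusions directly from the definitions of the fixing subgroups. Recall that for an intermediate field \(k\subseteq F\subseteq k^{\mathrm{sep}}\), the group \(G_F=\operatorname{Gal}(k^{\mathrm{sep}}/F)\) is identified with the subgroup
\[
\{\sigma\in G_k:\ \sigma(a)=a\ \text{for all } a\in F\}.
\]
This is legitimate because \(k^{\mathrm{sep}}/F\) is Galois, being separable and normal as a separable closure of \(F\). No topological input is needed, since the statement is purely set-theoretic inside \(G_k\).

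First, since \(K_i\subseteq K\), any \(\sigma\) fixing \(K\) pointwise fixes each \(K_i\) pointwise. Hence
\[
G_K\subseteq G_{K_1}\cap G_{K_2}.
\]

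Conversely, let \(\sigma\in G_{K_1}\cap G_{K_2}\), and consider the fixed field \(F=(k^{\mathrm{sep}})^{\langle\sigma\rangle}\). This is a subfield of \(k^{\mathrm{sep}}\) containing both \(K_1\) and \(K_2\). The compositum \(K=K_1K_2\) is by definition the smallest subfield of \(k^{\mathrm{sep}}\) containing \(K_1\cup K_2\), so \(K\subseteq F\), which says exactly that \(\sigma\) fixes \(K\). Concretely, every element of \(K\) is a rational expression
\[
\frac{\sum a_jb_j}{\sum a'_jb'_j},\qquad a_j,a'_j\in K_1,\quad b_j,b'_j\in K_2,
\]
and \(\sigma\) fixes it because \(\sigma\) is a field automorphism fixing every \(a_j,a'_j,b_j,b'_j\). (Since \(K_1,K_2\) are algebraic, \(K\) is even just the \(K_1\)-span of \(K_2\), but this refinement is not needed.) Thus \(\sigma\in G_K\), giving the reverse inclusion.

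There is no real obstacle. The only point deserving a sentence is the identification of \(G_F\) with the pointwise stabilizer of \(F\) in \(G_k\), which requires \(K_1\), \(K_2\) and \(K\) to lie inside the fixed \(k^{\mathrm{sep}}\). This holds by hypothesis for \(K_1,K_2\), and for \(K\) because the compositum is formed inside \(k^{\mathrm{sep}}\).
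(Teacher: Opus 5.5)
Your proof is correct and takes essentially the same approach as the paper, which argues in one line that an automorphism of \(k^{\mathrm{sep}}\) fixes \(K_1K_2\) if and only if it fixes both \(K_1\) and \(K_2\). You spell out the two inclusions and use minimality of the compositum for the reverse one; that is exactly the content of the paper's sentence.
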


\begin{proof}
An automorphism of \(k^{\mathrm{sep}}\) fixes \(K_1K_2\) if and only
if it fixes both \(K_1\) and \(K_2\). This is exactly the stated
intersection.
\end{proof}

\begin{remark}
Proposition~\ref{prop:intersection} fixes the group-theoretic direction of
the compositum construction. The cohomological data used below are the
restriction maps from \(K_1\) and \(K_2\) to \(K\).
\end{remark}

From now on assume
\[
\operatorname{char}(k)\neq p,
\qquad
\mu_p\subseteq k.
\]
Fix an identification \(\mu_p\cong\Fp\), and write
\[
H^\bullet(F)
=
H^\bullet(G_F,\Fp)
\]
for absolute Galois cohomology. Standard background on absolute and
maximal pro-\(p\) Galois groups and continuous Galois cohomology may be found
in \cite{NeukirchSchmidtWingberg}.

Kummer theory gives
\[
H^1(F)\cong F^\times/F^{\times p}.
\]
For \(i=1,2\), let
\[
U_i
=
\im\!\left(
\res_{K/K_i}:H^1(K_i)\longrightarrow H^1(K)
\right).
\]
Set
\[
U_0=U_1\cap U_2.
\]

The norm-residue theorem implies that \(H^\bullet(K)\) is generated
in degree one with defining relations generated in degree two
\cite{WeibelNormResidue}. Thus, once
\[
H^1(K)=U_1+U_2,
\]
the quadratic algebra \(H^\bullet(K)\) is determined by the internal
quadratic relations on \(U_1,U_2\), their overlap \(U_0\), and the
mixed products between \(U_1\) and \(U_2\).

Under Kummer theory, if
\[
[a]\in H^1(K_i),
\qquad
[b]\in H^1(K_j),
\]
then after restriction to \(K\) their cup product corresponds to the
Milnor symbol
\[
\{a,b\}\in K_2^M(K)/p.
\]
Consequently the mixed quadratic relations can be tested
arithmetically by symbol vanishing.

\subsection{An orthogonal composita criterion}

We first isolate the simplest situation.

For \(i=1,2\), let \(B_i\subseteq H^\bullet(K)\) be the graded
subalgebra generated by \(U_i\).

\begin{theorem}
\label{thm:orthogonal-composita}
Assume:
\begin{enumerate}[label=\textnormal{(\roman*)}]
\item\label{okc-gen}
\[
H^1(K)=U_1\oplus U_2;
\]

\item\label{okc-mixed}
\[
U_1U_2=U_2U_1=0
\]
in \(H^2(K)\);

\item\label{okc-intersection}
\[
(B_1)_+\cap(B_2)_+=0
\]
inside \(H^\bullet(K)\).
\end{enumerate}
Then the natural map
\[
B_1\sqcap B_2
\longrightarrow
H^\bullet(K)
\]
is an isomorphism of graded algebras. In particular, if \(B_1\) and
\(B_2\) are universally Koszul, then \(H^\bullet(K)\) is universally
Koszul.
\end{theorem}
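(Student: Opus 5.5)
The natural map is
\[
\Phi:B_1\sqcap B_2\longrightarrow H^\bullet(K),\qquad
\lambda+b_1+b_2\longmapsto \lambda+b_1+b_2 .
\]
I will show that \(\Phi\) is a graded-algebra homomorphism, then that it is surjective, and finally that it is injective. Theorem~\ref{thm:directsum-uk} then finishes the proof.

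\textbf{Homomorphism.} On each factor \(B_i\), \(\Phi\) is the inclusion, so it respects products there. It remains to see that \((B_1)_+(B_2)_+=0=(B_2)_+(B_1)_+\) in \(H^\bullet(K)\).
\begin{itemize}
\item Every element of \((B_i)_+\) is a sum of monomials \(u_1\cdots u_n\) with \(n\ge1\) and all \(u_j\in U_i\).
\item A product of a monomial from \(B_1\) with one from \(B_2\) therefore contains an adjacent factor \(u v\) with \(u\in U_1\), \(v\in U_2\), or in the other order.
\item By hypothesis \ref{okc-mixed} that factor vanishes, so the whole product vanishes by associativity.
\end{itemize}
Hence multiplication in \(B_1\sqcap B_2\) is carried to multiplication in \(H^\bullet(K)\).

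\textbf{Surjectivity.} By the norm-residue theorem, \(H^\bullet(K)\) is generated in degree one. By hypothesis \ref{okc-gen}, \(H^1(K)=U_1+U_2\). Expand a monomial in elements of \(U_1+U_2\) into monomials whose letters each lie in \(U_1\) or in \(U_2\).
\begin{itemize}
\item Any such monomial mixing letters from both spaces has an adjacent mixed pair, so it is zero by \ref{okc-mixed}.
\item What survives lies in \(B_1\) or in \(B_2\).
\end{itemize}
Thus \(H^\bullet(K)=\Fp+(B_1)_++(B_2)_+\), and \(\Phi\) is surjective.

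\textbf{Injectivity.} Suppose \(\lambda+b_1+b_2\mapsto0\). Comparing degree zero gives \(\lambda=0\). Then \(b_1=-b_2\) lies in \((B_1)_+\cap(B_2)_+\), which is zero by hypothesis \ref{okc-intersection}. So \(b_1=b_2=0\).

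This is the one place where \ref{okc-intersection} is used. In degree one it is already implied by the directness of the sum in \ref{okc-gen}, but in higher degrees it is a genuinely separate hypothesis. Note also that \(B_i\), being a subalgebra of \(H^\bullet(K)\), is connected graded and generated by \(U_i\).

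\textbf{Conclusion and the remaining subtlety.} \(\Phi\) is a graded-algebra isomorphism, so if \(B_1\) and \(B_2\) are universally Koszul, Theorem~\ref{thm:directsum-uk} shows \(H^\bullet(K)\) is universally Koszul. That theorem applies only to quadratic algebras, and the hypothesis that \(B_i\) is universally Koszul already presupposes \(B_i\) is quadratic. If one wants this independently, it follows from the isomorphism: a relation of \(B_1\) becomes a relation of \(H^\bullet(K)\) supported on \(U_1\), and \(H^\bullet(K)\) has quadratic relations. I expect this bookkeeping about quadraticity to be the only subtle point; the rest is formal.
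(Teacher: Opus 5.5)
Your proof is correct and follows the paper's argument essentially step by step: hypothesis (ii) kills every mixed monomial, so $(B_1)_+(B_2)_+=(B_2)_+(B_1)_+=0$ and the map is a homomorphism. It is surjective because $H^\bullet(K)$ is generated by $U_1+U_2$, it is injective by exactly hypothesis (iii), and Theorem~\ref{thm:directsum-uk} gives universal Koszulity. Your closing remark on quadraticity is not needed, since it is built into the hypothesis that $B_i$ is universally Koszul; if you do want it, the clean justification is that killing $U_2$ gives an algebra retraction $H^\bullet(K)\to B_1$, which carries the quadratic relations of $H^\bullet(K)$ to quadratic relations presenting $B_1$.
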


\begin{proof}
By \ref{okc-gen} and the quadraticity of Galois cohomology,
\(H^\bullet(K)\) is generated by \(U_1\cup U_2\), hence by the two
subalgebras \(B_1,B_2\).

Hypothesis~\ref{okc-mixed} implies that every product involving a
positive-degree element generated from \(U_1\) and a positive-degree
element generated from \(U_2\) vanishes: any mixed monomial contains
an adjacent transition between the two degree-one generating sets,
and the corresponding degree-two product is zero. Thus multiplication
between \((B_1)_+\) and \((B_2)_+\) is zero in either order.

The resulting homomorphism
\[
B_1\sqcap B_2\to H^\bullet(K)
\]
is therefore surjective. Its kernel in positive degree consists
precisely of identifications between positive-degree elements of
\(B_1\) and \(B_2\); by \ref{okc-intersection}, no such nonzero
identification exists. Hence the map is injective.

The universal Koszulity assertion follows from
Theorem~\ref{thm:directsum-uk}.
\end{proof}

\begin{remark}
Hypothesis~\ref{okc-mixed} is equivalent to the vanishing of all mixed
Kummer symbols for classes in the restriction-image subspaces
\(U_1\) and \(U_2\). By bilinearity it is enough to check this on chosen
bases of those subspaces. Together with the positive-degree intersection
condition, it gives the direct-sum presentation of
Theorem~\ref{thm:orthogonal-composita}.
\end{remark}

\subsection{Kummer graph presentations}

The graph criterion applies to square-zero degree-one spaces. For odd \(p\),
degree-one squares vanish by graded commutativity. For \(p=2\), we impose
square vanishing on the degree-one space under consideration. A convenient
field-theoretic sufficient condition is \(-1\in K^{\times2}\), because
\[
\{a,a\}=\{a,-1\}
\]
in \(K_2^M(K)/2\).

\begin{definition}[Kummer graph presentation]
\label{def:kummer-graph}
Let \(B\subseteq H^\bullet(K)\) be a graded subalgebra generated by a
finite-dimensional subspace \(W\subseteq H^1(K)\), and assume
\(x^2=0\) for every \(x\in W\). Let
\[
X=\{x_v:v\in V(\Gamma)\}
\]
be a basis of \(W\) indexed by a finite graph \(\Gamma\). We say that
\(B\) has the \emph{Kummer graph presentation associated with \(\Gamma\)}
if the assignment of the exterior generator \(x_v\) to the corresponding
cohomology class induces an isomorphism
\[
A_\Gamma\overset{\sim}{\longrightarrow}B.
\]
For \(B=H^\bullet(K)\) and \(W=H^1(K)\), this is the Kummer graph
presentation of the full Galois cohomology algebra.
\end{definition}

Because the algebras considered below are quadratic, the presentation is
verified by degree-one generators and the full degree-two relation space. If
\(W\) is square-zero, the cup product factors through
\[
\smile:\bigwedge\nolimits^2 W\longrightarrow H^2(K).
\]
Thus a graph presentation requires equality of the complete kernel of this
map with the Stanley--Reisner relation space; the vanishing pattern of
individual symbols alone does not determine the presentation.

\begin{theorem}
\label{thm:kummer-graph-composita}
Let \(K=K_1K_2\), and assume \(x^2=0\) for every
\(x\in H^1(K)\). Let
\[
U_i=\operatorname{im}(H^1(K_i)\to H^1(K)),
\qquad U_0=U_1\cap U_2.
\]
Suppose there are finite sets
\[
X_0,\quad X_1,\quad X_2
\]
of classes in \(H^1(K)\) satisfying:
\begin{enumerate}[label=\textnormal{(\roman*)}]
\item\label{kg-basis}
\(X_0\) is a basis of \(U_0\), \(X_0\cup X_i\) is a basis of \(U_i\),
and
\[
X_0\cup X_1\cup X_2
\]
is a basis of \(H^1(K)\).

\item\label{kg-piece}
For \(i=1,2\), the subalgebra \(B_i\subseteq H^\bullet(K)\) generated by
\(U_i\) has a Kummer graph presentation
\[
B_i\cong A_{\Gamma_i}
\]
on vertex set \(X_0\cup X_i\). The two induced graphs on \(X_0\)
coincide with a graph \(\Gamma_0\). If \(B_0\) denotes the subalgebra
generated by \(U_0\), assume also that
\[
B_1^2\cap B_2^2=B_0^2
\]
inside \(H^2(K)\).

\item\label{kg-mixed}
Let \(E_{12}\) be a set of edges between \(X_1\) and \(X_2\), put
\[
V_i=\Span_{\Fp}(X_i),
\qquad
M=V_1\wedge V_2,
\]
and define the mixed nonedge relation space
\[
R_{12}
=
\Span_{\Fp}\{x\wedge y:
 x\in X_1,\ y\in X_2,\ \{x,y\}\notin E_{12}\}.
\]
Assume that the cup product on the mixed summand satisfies
\[
\ker\!\left(\smile_K|_M:M\longrightarrow H^2(K)\right)=R_{12}
\]
and that
\[
\smile_K(M)\cap\bigl(B_1^2+B_2^2\bigr)=0.
\]
\end{enumerate}
Let \(\Gamma\) be the graph on \(X_0\cup X_1\cup X_2\) with edge set
\[
E(\Gamma_1)\cup E(\Gamma_2)\cup E_{12}.
\]
Then
\[
H^\bullet(K)\cong A_\Gamma.
\]
If \(\Gamma\) is diagonal, then \(H^\bullet(K)\) is universally Koszul.
\end{theorem}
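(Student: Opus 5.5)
Since \(H^\bullet(K)\) is quadratic by the norm-residue theorem and square-zero on \(H^1(K)\), the cup product factors through \(\bigwedge^2 H^1(K)\to H^2(K)\). The plan is to show that the graph-theoretic assignment \(x_v\mapsto v\) gives a surjection \(A_\Gamma\to H^\bullet(K)\) whose degree-two component is injective. It then suffices to compare the relation spaces in degree two. Indeed, both algebras are quadratic on the same degree-one space: \(A_\Gamma\) by definition, and \(H^\bullet(K)\) by quadraticity. Equality of the degree-two kernels, namely
\[
\ker\bigl(\textstyle\bigwedge^2 H^1(K)\to H^2(K)\bigr)=\Span\{x\wedge y:\{x,y\}\notin E(\Gamma)\},
\]
therefore gives the isomorphism in all degrees. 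The last assertion then follows from Theorem~\ref{thm:CQ}.

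To identify the kernel, write \(W_0=\Span(X_0)\) and use hypothesis \ref{kg-basis} to decompose \(H^1(K)=W_0\oplus V_1\oplus V_2\). This gives
\[
\textstyle\bigwedge^2 H^1(K)=\bigwedge^2 U_1+\bigwedge^2 U_2+M,
\]
where \(\bigwedge^2U_1\cap\bigwedge^2U_2=\bigwedge^2 W_0\) and \(M=V_1\wedge V_2\) is complementary to \(\bigwedge^2U_1+\bigwedge^2U_2\). The relation space of \(\Gamma\) splits accordingly. Nonedges inside \(X_0\cup X_i\) are nonedges of \(\Gamma_i\), because edges of \(\Gamma\) among these vertices come only from \(E(\Gamma_i)\) (the induced graphs on \(X_0\) agree). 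The remaining nonedges are those between \(X_1\) and \(X_2\) outside \(E_{12}\), which span \(R_{12}\). The inclusion of the \(\Gamma\)-relations into the kernel is immediate from \ref{kg-piece} and \ref{kg-mixed}.

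For the reverse inclusion, take \(\omega=\omega_1+\omega_2+m\) in the kernel, with \(\omega_i\in\bigwedge^2U_i\) and \(m\in M\). Its image satisfies \(\smile(m)=-\smile(\omega_1)-\smile(\omega_2)\in B_1^2+B_2^2\), so by the second condition of \ref{kg-mixed} we get \(\smile(m)=0\), hence \(m\in R_{12}\). Then \(\smile(\omega_1)=-\smile(\omega_2)\in B_1^2\cap B_2^2=B_0^2\). Since \(B_0\) is generated by \(W_0\), which is spanned by \(X_0\), there is \(\eta\in\bigwedge^2W_0\) with \(\smile(\eta)=\smile(\omega_1)\). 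Now \(\omega_1-\eta\) lies in \(\ker(\bigwedge^2U_1\to B_1^2)\), which by \ref{kg-piece} is the nonedge space of \(\Gamma_1\). Likewise \(\omega_2+\eta\) lies in the nonedge space of \(\Gamma_2\). Summing these pieces puts \(\omega\) in the \(\Gamma\)-relation space.

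I expect this last step to be the main obstacle: the overlap in \(B_0^2\) has to be corrected by a common \(\eta\in\bigwedge^2W_0\). This is exactly the role of the hypothesis \(B_1^2\cap B_2^2=B_0^2\), together with the fact that both pieces restrict to the same graph \(\Gamma_0\) on \(X_0\).
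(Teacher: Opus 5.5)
Your proposal is correct and follows essentially the same route as the paper. Both arguments split \(\bigwedge^2 H^1(K)=(\bigwedge^2U_1+\bigwedge^2U_2)\oplus M\), kill the mixed component via \(\smile_K(M)\cap(B_1^2+B_2^2)=0\), correct the overlap by a common lift in \(\bigwedge^2U_0\) using \(B_1^2\cap B_2^2=B_0^2\), and conclude by quadraticity and Theorem~\ref{thm:CQ}; the only difference is that the paper first isolates \(\ker(\smile_K|_S)=R_1+R_2\) as a claim and then treats \(M\), whereas you handle \(M\) first, which is purely cosmetic.
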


\begin{proof}
Put
\[
S=\bigwedge\nolimits^2 U_1+\bigwedge\nolimits^2 U_2
\subseteq \bigwedge\nolimits^2 H^1(K).
\]
The basis in Condition~\ref{kg-basis} gives a direct decomposition
\[
\bigwedge\nolimits^2 H^1(K)=S\oplus M.
\]
Let \(R_i\subseteq\bigwedge^2U_i\) be the quadratic relation space of
\(A_{\Gamma_i}\). Condition~\ref{kg-piece} identifies
\(R_i\) with the kernel of the cup product on \(\bigwedge^2U_i\).
We claim that
\[
\ker(\smile_K|_S)=R_1+R_2.
\]
Indeed, let \(z=z_1+z_2\in S\), with
\(z_i\in\bigwedge^2U_i\), and suppose \(\smile_K(z)=0\). Then
\[
\smile_K(z_1)=-\smile_K(z_2)
\in B_1^2\cap B_2^2=B_0^2.
\]
Choose \(z_0\in\bigwedge^2U_0\) representing this common class. Then
\[
z_1-z_0\in R_1,
\qquad
z_2+z_0\in R_2,
\]
so \(z\in R_1+R_2\). The reverse inclusion is immediate.

Now write an arbitrary element of \(\bigwedge^2H^1(K)\) as
\(s+m\), with \(s\in S\) and \(m\in M\), and suppose
\(\smile_K(s+m)=0\). Since \(\smile_K(s)\in B_1^2+B_2^2\), we have
\[
\smile_K(m)=-\smile_K(s)
\in \smile_K(M)\cap(B_1^2+B_2^2)=0.
\]
Hence \(m\in R_{12}\), while \(s\in R_1+R_2\). Therefore
\[
\ker\!\left(
\smile_K:\bigwedge\nolimits^2H^1(K)\longrightarrow H^2(K)
\right)
=R_1+R_2+R_{12},
\]
which is exactly the Stanley--Reisner quadratic relation space
\(R_\Gamma\).

By Condition~\ref{kg-basis}, the vertices of \(\Gamma\) give all
of \(H^1(K)\). The norm-residue theorem makes \(H^\bullet(K)\)
quadratic, so the equality of degree-one generators and quadratic
relation spaces yields
\[
A_\Gamma\overset{\sim}{\longrightarrow}H^\bullet(K).
\]
If \(\Gamma\) is diagonal, Theorem~\ref{thm:CQ} gives universal
Koszulity.
\end{proof}

\begin{remark}
\label{rem:kummer-arithmetic}
Condition~\ref{kg-basis} is computed from the maps
\[
K_i^\times/K_i^{\times p}\longrightarrow K^\times/K^{\times p}.
\]
Condition~\ref{kg-piece} records the internal relation spaces of the two
restriction-image subalgebras and their degree-two overlap. Condition~\ref{kg-mixed} then determines the genuinely mixed relations: the mixed
nonedges are exactly the kernel of the mixed cup-product map, while the
second equality excludes additional linear relations between mixed cup
products and the internal degree-two classes. Under Kummer theory these
conditions are expressed by Milnor symbols
\[
\{a,b\}\in K_2^M(K)/p.
\]
Thus the graph is determined by explicit degree-one and degree-two arithmetic
data.
\end{remark}

\subsection{A verified family: monomial Laurent-series composita}
\label{subsec:laurent-composita}

We now verify the preceding criterion for an explicit infinite family.
The family exhibits nonzero mixed cup products.

\begin{lemma}
\label{lem:laurent-cohomology}
Let \(\kappa\) be an algebraically closed field with
\(\operatorname{char}(\kappa)\neq p\), and put
\[
F_d=\kappa((t_1))\cdots((t_d)).
\]
Let \(e_i\in H^1(F_d,\Fp)\) be the Kummer class of \(t_i\).
Then
\[
H^\bullet(F_d,\Fp)
\cong
\Lambda_{\Fp}(e_1,\dots,e_d).
\]
In particular, the products
\[
e_{i_1}\cdots e_{i_r},
\qquad
1\leq i_1<\cdots<i_r\leq d,
\]
form an \(\Fp\)-basis of \(H^r(F_d,\Fp)\).
\end{lemma}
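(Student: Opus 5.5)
We compute the absolute Galois group of \(F_d\) and show that it is abelian. We argue by induction on \(d\), using the standard structure theory of Galois groups of complete discretely valued fields.

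For \(d=0\) the field \(\kappa\) is algebraically closed, so \(G_{F_0}\) is trivial and the cohomology is \(\Fp\) in degree zero. For the inductive step, \(F_d=F_{d-1}((t_d))\) is complete with respect to the \(t_d\)-adic valuation, with residue field \(F_{d-1}\). Since \(\operatorname{char}(\kappa)\neq p\), every finite extension of degree prime to the residue characteristic is tame. The exact sequence
\[
1\to I\to G_{F_d}\to G_{F_{d-1}}\to 1,
\]
together with the tame quotient \(I/P\cong\prod_{\ell\neq\operatorname{char}\kappa}\mathbb Z_\ell(1)\), lets us pass to maximal pro-\(p\) quotients. Because \(\mu_{p^\infty}\subseteq\kappa\), the cyclotomic action on \(\mathbb Z_p(1)\) is trivial. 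Inductively we obtain
\[
G_{F_d}(p)\cong \mathbb Z_p^{d}.
\]
The inductive claim is that \(G_{F_{d-1}}(p)\cong\mathbb Z_p^{d-1}\) and \(G_{F_d}(p)\cong \mathbb Z_p(1)\rtimes G_{F_{d-1}}(p)\) with trivial action. Triviality of the action holds because all roots of unity already lie in \(\kappa\). The split extension then splits as a direct product, with a splitting given by the Kummer theory of \(t_d\).

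Next we pass from \(G_{F_d}\) to \(G_{F_d}(p)\) in cohomology. Since \(\mu_p\subseteq F_d\), the norm-residue theorem (quoted earlier from \cite{WeibelNormResidue}) gives \(H^\bullet(G_{F_d},\Fp)\cong H^\bullet(G_{F_d}(p),\Fp)\). Alternatively, the kernel of \(G_{F_d}\to G_{F_d}(p)\) has no \(\mathbb Z/p\) quotients, hence is \(p\)-cohomologically trivial in the relevant sense. The cohomology of \(\mathbb Z_p^d\) with \(\Fp\)-coefficients is the exterior algebra on \(H^1=\operatorname{Hom}(\mathbb Z_p^d,\Fp)\cong\Fp^d\), by the Künneth formula and \(H^\bullet(\mathbb Z_p,\Fp)=\Lambda(e)\). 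For \(p=2\) this is the exterior algebra and not the polynomial algebra, since \(\mathbb Z_2\) is torsion-free of cohomological dimension \(1\).

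It remains to identify the generators. Kummer theory gives \(H^1(F_d)\cong F_d^\times/F_d^{\times p}\). Every element of \(F_d^\times\) has the form \(c\,t_1^{n_1}\cdots t_d^{n_d}(1+m)\) with \(c\in\kappa^\times\) and \(1+m\) a principal unit. Here \(c\) is a \(p\)-th power because \(\kappa\) is algebraically closed, and \(1+m\) is a \(p\)-th power by Hensel's lemma, since \(p\) is invertible in the residue field. So \(e_1,\dots,e_d\) form a basis of \(H^1\), and the products \(e_{i_1}\cdots e_{i_r}\) form a basis of \(H^r\).

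The main obstacle is the inductive group-theoretic step. One must verify that the maximal pro-\(p\) quotient of the Galois group of \(L((t))\) is \(\mathbb Z_p(1)\times G_L(p)\) when \(\mu_{p^\infty}\subseteq L\). The alternative is a purely cohomological route through the residue exact sequence
\[
0\to H^r(F_{d-1})\to H^r(F_d)\to H^{r-1}(F_{d-1})\to 0,
\]
which is split by cup product with \(e_d\). This sequence also yields the exterior-algebra structure directly by induction. I would try it first, as it avoids the group computation, and then check multiplicativity using \(e_d^2=0\). For \(p=2\) this holds because \(\{t,t\}=\{t,-1\}\) and \(-1\in\kappa^{\times2}\).
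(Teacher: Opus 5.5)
Your primary argument is correct in substance, but it is not the paper's route. The paper's proof is exactly the ``cohomological alternative'' you sketch at the end: induction on \(d\) via the split Witt residue sequence \(H^q(F_d)\cong H^q(F_{d-1})\oplus H^{q-1}(F_{d-1})\cup(t_d)\), the vanishing \(e_d^2=0\) (using \(\{a,a\}=\{a,-1\}\) for \(p=2\)), hence \(H^\bullet(F_d)\cong H^\bullet(F_{d-1})\otimes\Lambda_{\Fp}(e_d)\). Your main route instead computes \(G_{F_d}(p)\cong\mathbb Z_p^d\), transports cohomology along inflation, and applies K\"unneth. This gives a stronger conclusion: the maximal pro-\(p\) Galois group is the pro-\(p\) RAAG of the complete graph \(K_d\), which fits Theorem~\ref{thm:laurent-composita} and the viewpoint of Section~\ref{sec:raag-composita}. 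The cost is ramification theory plus the norm-residue theorem, whereas the paper's route needs only the residue sequence and yields the ring structure directly.

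Three points in your route need tightening.
\begin{enumerate}
\item For \(d\ge1\) and \(\operatorname{char}\kappa=\ell>0\), \(G_{F_d}\) itself is not abelian, since its wild inertia is a nonabelian free pro-\(\ell\) group. Only \(G_{F_d}(p)\) is abelian, and that is all you actually use.
\item Having no \(\mathbb Z/p\) quotients does not make a profinite group cohomologically trivial mod \(p\); for example \(H^1(A_5,\Ftwo)=0\) but \(H^2(A_5,\Ftwo)\neq0\). The correct argument applies the norm-residue theorem to \(F_d(p)\): its \(H^1\) vanishes, so \(H^n(F_d(p),\Fp)=0\) for all \(n\ge1\). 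Hochschild--Serre then gives the inflation isomorphism.
\item Passing to maximal pro-\(p\) quotients is only right exact, and a central extension of \(\mathbb Z_p^{d-1}\) by \(\mathbb Z_p\) need not split once \(d-1\ge2\). Both the injectivity of \(\mathbb Z_p(1)\to G_{F_d}(p)\) and the splitting must come from arithmetic. The extensions \(F_d(t_d^{1/p^\infty})\) and the unramified lift of \(F_{d-1}(p)\) are linearly disjoint Galois extensions of \(F_d\). Their compositum is \(p\)-closed: all \(p\)-extensions are tame, its residue field is \(p\)-closed, and its value group is \(p\)-divisible. This gives \(G_{F_d}(p)\cong\mathbb Z_p(1)\times G_{F_{d-1}}(p)\) at once.
\end{enumerate}
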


\begin{proof}
We argue by induction on \(d\). For \(d=0\), the assertion follows
because \(\kappa\) is algebraically closed.

Let \(F_d=F_{d-1}((t_d))\). The Witt exact sequence for a complete
discretely valued field with residue field \(F_{d-1}\)
\cite[Proposition~7.7]{GaribaldiMerkurjevSerre} gives, after choosing
the uniformizer \(t_d\), a split decomposition
\[
H^q(F_d,\mu_p^{\otimes q})
\cong
H^q(F_{d-1},\mu_p^{\otimes q})
\oplus
H^{q-1}(F_{d-1},\mu_p^{\otimes(q-1)})\cup(t_d).
\]
Since \(\kappa\) is algebraically closed, it contains \(\mu_p\), so
after fixing the usual identification of twists this is the
decomposition of \(H^q(F_d,\Fp)\).

The new degree-one class \(e_d=(t_d)\) satisfies \(e_d^2=0\).
For odd \(p\) this follows from graded commutativity. For \(p=2\),
the Milnor relation \(\{a,a\}=\{a,-1\}\) and the fact that
\(-1\in\kappa^{\times2}\) give the same conclusion. The split Witt
decomposition is therefore multiplicatively
\[
H^\bullet(F_d,\Fp)
\cong
H^\bullet(F_{d-1},\Fp)\otimes_{\Fp}\Lambda_{\Fp}(e_d).
\]
The induction hypothesis completes the proof.
\end{proof}

Fix the iterated Laurent-series tower
\[
M_0=\kappa,
\qquad
M_j=M_{j-1}((u_j)),
\qquad
M=M_d.
\]
For \(S\subseteq[d]=\{1,\dots,d\}\), define recursively
\[
K_{S,0}=\kappa,
\qquad
K_{S,j}=K_{S,j-1}((v_{S,j}))\subseteq M_j,
\]
where
\[
v_{S,j}
=
\begin{cases}
u_j,&j\in S,\\
u_j^p,&j\notin S.
\end{cases}
\]
Set \(K_S=K_{S,d}\). This recursive construction fixes the order of the
valuations and gives a specified embedding of every \(K_S\) in the common
ambient field \(M\).

\begin{theorem}
\label{thm:laurent-composita}
Let \(S,T\subseteq[d]\). Then:
\begin{enumerate}[label=\textnormal{(\alph*)}]
\item\label{laurent-fields}
\[
K_SK_T=K_{S\cup T},
\qquad
K_S\cap K_T=K_{S\cap T}.
\]

\item\label{laurent-cohomology}
For every \(R\subseteq[d]\),
\[
H^\bullet(K_R,\Fp)
\cong
\Lambda_{\Fp}(e_{R,1},\dots,e_{R,d}),
\]
where \(e_{R,j}\) is the Kummer class of \(v_{R,j}\).

\item\label{laurent-images}
Put \(K=K_{S\cup T}\) and set
\[
X_1=S\setminus T,\qquad
X_2=T\setminus S,\qquad
X_0=(S\cap T)\cup\bigl([d]\setminus(S\cup T)\bigr).
\]
If \(e_j=e_{S\cup T,j}\), then
\[
U_1
=
\Span_{\Fp}\{e_j:j\in X_0\cup X_1\},
\qquad
U_2
=
\Span_{\Fp}\{e_j:j\in X_0\cup X_2\},
\]
and
\[
U_1\cap U_2
=
\Span_{\Fp}\{e_j:j\in X_0\}.
\]

\item\label{laurent-graph}
The Kummer graph of \(H^\bullet(K,\Fp)\) with respect to
\(\{e_1,\dots,e_d\}\) is the complete graph \(K_d\). In particular,
all mixed products
\[
e_i e_j,
\qquad
i\in X_1,\quad j\in X_2,
\]
are nonzero, and \(H^\bullet(K,\Fp)\) is universally Koszul.
\end{enumerate}
Thus, whenever both \(S\setminus T\) and \(T\setminus S\) are
nonempty, \(K_S\) and \(K_T\) are proper incomparable subfields of
their compositum and give a nontrivial verified composita family.
\end{theorem}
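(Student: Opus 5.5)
The plan is to prove \ref{laurent-fields} by an explicit description of each \(K_R\) inside \(M\) in terms of exponents. Parts \ref{laurent-cohomology}--\ref{laurent-graph} then follow from Lemma~\ref{lem:laurent-cohomology} together with a direct Kummer computation.

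\emph{Step 1: exponent description of \(K_R\).} Every \(f\in M\) has an iterated expansion: as a series in \(u_d\) with coefficients in \(M_{d-1}\), those as series in \(u_{d-1}\), and so on. Expanding fully, \(f\) determines a support set of exponent vectors \((n_1,\dots,n_d)\in\mathbb Z^d\). I claim that \(K_R\) is exactly the set of \(f\in M\) all of whose exponent vectors satisfy \(n_j\equiv 0\pmod p\) for every \(j\notin R\).

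I would prove this by induction on the level \(j\), using the recursion \(K_{R,j}=K_{R,j-1}((v_{R,j}))\). A series in \(u_j^p\) with coefficients in \(K_{R,j-1}\) is the same thing as a series in \(u_j\) with coefficients in \(K_{R,j-1}\) whose \(u_j\)-exponents are divisible by \(p\). When \(v_{R,j}=u_j\), there is no condition on the \(u_j\)-exponents. The only care needed is that the support conditions on the coefficients (the lower levels) are preserved under taking the series. This is clear, because a Laurent series lies in \(K_{R,j-1}((\cdot))\) exactly when each of its coefficients lies in \(K_{R,j-1}\).

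\emph{Step 2: the intersection and the compositum.} With this description, \(K_S\cap K_T=K_{S\cap T}\) is immediate. For the compositum, clearly \(K_SK_T\subseteq K_{S\cup T}\), since both fields lie in \(K_{S\cup T}\). For the reverse inclusion, take \(f\in K_{S\cup T}\) and split its support according to the residues modulo \(p\) of the exponents \(n_j\) for \(j\in T\setminus S\). This gives a finite decomposition
\[
f=\sum_{\alpha\in\{0,\dots,p-1\}^{T\setminus S}}\Bigl(\prod_{j\in T\setminus S}u_j^{\alpha_j}\Bigr)f_\alpha .
\]
Each \(f_\alpha\) satisfies the support condition for \(K_S\), so \(f_\alpha\in K_S\). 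I still need to check that the sub-series \(f_\alpha\) lies in \(M\), that is, that it has well-ordered supports at every level; this holds because it is obtained level-wise by restricting supports. Each monomial \(\prod_{j\in T\setminus S} u_j^{\alpha_j}\) lies in \(K_T\), so \(f\in K_SK_T\). I expect this step, and in particular making Step 1 rigorous for iterated (not jointly convergent) Laurent series, to be the main technical obstacle.

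\emph{Step 3: the remaining parts.} Part \ref{laurent-cohomology} is Lemma~\ref{lem:laurent-cohomology} applied with \(t_j=v_{R,j}\), since \(K_R\) is itself an iterated Laurent-series field over \(\kappa\) in the variables \(v_{R,j}\).

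For \ref{laurent-images}, I use that restriction is compatible with Kummer theory and is an algebra map, and I compute in \(K=K_{S\cup T}\) the image of each Kummer class of \(K_S\):
\begin{itemize}
\item if \(j\in S\), then \(v_{S,j}=u_j=v_{K,j}\), so \(e_{S,j}\) restricts to \(e_j\);
\item if \(j\notin S\cup T\), then \(v_{S,j}=u_j^p=v_{K,j}\), so \(e_{S,j}\) again restricts to \(e_j\);
\item if \(j\in T\setminus S\), then \(v_{S,j}=u_j^p\) is a \(p\)-th power in \(K\), so its class restricts to \(0\).
\end{itemize}
Since the \(e_{S,j}\) span \(H^1(K_S)\), this gives \(U_1=\Span\{e_j:j\in X_0\cup X_1\}\), and \(U_2\) follows by symmetry. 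Because the \(e_j\) form a basis of \(H^1(K)\), the stated formula for \(U_1\cap U_2\) follows.

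For \ref{laurent-graph}, part \ref{laurent-cohomology} identifies \(H^\bullet(K)\) with the exterior algebra on the \(e_j\), which is \(A_{K_d}\). In particular every product \(e_ie_j\) with \(i\neq j\) is a nonzero basis element of \(H^2(K)\). The complete graph contains no induced \(P_4\) or \(C_4\), so it is diagonal, and Theorem~\ref{thm:CQ} gives universal Koszulity. As a consistency check, the hypotheses of Theorem~\ref{thm:kummer-graph-composita} hold with \(E_{12}\) the set of all mixed edges and \(R_{12}=0\), and all degree-one squares vanish by the lemma.

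The final sentence of the statement follows from \ref{laurent-fields}. If \(S\setminus T\neq\varnothing\), then \(K_{S\cup T}\neq K_T\), because the two fields have different exponent conditions. The same holds with \(S\) and \(T\) interchanged, and incomparability of \(K_S\) and \(K_T\) is read off the same way.
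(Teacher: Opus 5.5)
Your proof is correct. Parts (b)--(d) and the final sentence are handled essentially as in the paper, but for part (a) you take a different route.

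\emph{The paper's route.} It proves (b) first and then argues Galois-theoretically over $I=S\cap T$. It views $K_{S\cup T}/K_I$ as the Kummer extension obtained by adjoining the $p$-th roots $u_j$ of $u_j^p$ for $j\in(S\cup T)\setminus I$. Independence of these Kummer classes in $H^1(K_I,\Fp)$, which comes from (b), gives an elementary abelian Galois group of rank $|(S\cup T)\setminus I|$. The paper then identifies $K_S$ and $K_T$ as fixed fields of complementary coordinate subgroups and reads off both the compositum and the intersection from the Galois correspondence.

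\emph{Your route.} You characterize each $K_R$ inside $M$ by a congruence condition on exponent vectors. This makes the intersection immediate and gives the compositum through an explicit finite decomposition by residue classes.

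\emph{Comparison.} Your argument is elementary and uses neither Kummer theory nor roots of unity for (a). In fact, your residue-class decomposition is exactly what justifies the step the paper asserts without proof: that $K_{S\cup T}$ (and likewise $K_S$ and $K_T$) is generated over $K_I$ by those $p$-th roots. In return, the paper's route yields extra structure, namely the Galois group of $K_{S\cup T}/K_I$ and the linear disjointness of $K_S$ and $K_T$ over $K_I$. You would recover these only afterwards, from (b).

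The point you flag as the main technical obstacle is routine. Restricting supports level by level keeps every coefficient series bounded below, so the pieces $f_\alpha$ lie in $M$, as you say.
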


\begin{proof}
Part~\ref{laurent-cohomology} follows from
Lemma~\ref{lem:laurent-cohomology}, with parameters
\(v_{R,1},\dots,v_{R,d}\).

Let \(I=S\cap T\) and \(R=S\cup T\). The extension
\[
K_R/K_I
\]
is obtained by adjoining \(p\)-th roots of the parameters
\(u_j^p\), for \(j\in R\setminus I\). By
Part~\ref{laurent-cohomology}, their Kummer classes in
\(H^1(K_I,\Fp)\) are linearly independent. Kummer theory therefore
gives
\[
\operatorname{Gal}(K_R/K_I)
\cong
(\mathbb Z/p\mathbb Z)^{\,|R\setminus I|}.
\]
The classes indexed by \(R\setminus I\) are independent in
\(K_I^\times/K_I^{\times p}\), so the corresponding cyclic Kummer
extensions are linearly disjoint over \(K_I\). The extension \(K_S/K_I\)
is generated by the roots indexed by \(S\setminus I\), and \(K_T/K_I\) is
generated by the disjoint set indexed by \(T\setminus I\). Under
\[
\operatorname{Gal}(K_R/K_I)\cong(\mathbb Z/p\mathbb Z)^{\,|R\setminus I|},
\]
their fixed subgroups are the coordinate subgroups complementary to these
two sets. The elementary Galois correspondence therefore gives
\[
K_SK_T=K_R,
\qquad
K_S\cap K_T=K_I,
\]
proving Part~\ref{laurent-fields}.

We next compute restriction on \(H^1\). A parameter of \(K_S\)
indexed by \(j\in T\setminus S\) is \(u_j^p\), which becomes the
\(p\)-th power of \(u_j\) in \(K_R\); its Kummer class therefore
restricts to zero. Every other parameter of \(K_S\) is exactly the
corresponding parameter of \(K_R\), so its class restricts to \(e_j\).
This proves the formula for \(U_1\), and the formula for \(U_2\) is
symmetric. Their intersection is then the span indexed by \(X_0\),
proving Part~\ref{laurent-images}.

Finally, Part~\ref{laurent-cohomology} identifies
\(H^\bullet(K,\Fp)\) with the exterior algebra on all \(d\) classes.
Equivalently, its Kummer graph is the complete graph \(K_d\). Hence
every wedge \(e_ie_j\) with \(i\neq j\), including every mixed product
with \(i\in X_1\) and \(j\in X_2\), is nonzero. The complete graph is
diagonal, so Theorem~\ref{thm:CQ} yields universal Koszulity.
\end{proof}

\begin{example}[The two-parameter compositum]
\label{ex:two-parameter}
Take \(d=2\), \(S=\{1\}\), and \(T=\{2\}\). Then
\[
K_0=\kappa((u_1^p))((u_2^p)),
\]
\[
K_1=\kappa((u_1))((u_2^p)),
\qquad
K_2=\kappa((u_1^p))((u_2)),
\]
and
\[
K=\kappa((u_1))((u_2)).
\]
Theorem~\ref{thm:laurent-composita} gives
\[
K=K_1K_2,
\qquad
K_1\cap K_2=K_0.
\]
If \(e_1=(u_1)\) and \(e_2=(u_2)\), then
\[
U_1=\Fp e_1,\qquad
U_2=\Fp e_2,
\qquad
e_1e_2\neq0.
\]
Thus
\[
H^\bullet(K,\Fp)
\cong
\Lambda_{\Fp}(e_1,e_2),
\]
so the compositum is governed by a complete two-vertex Kummer graph. The
nonzero mixed symbol \(e_1e_2\) is the simplest instance of the mixed-edge
part of Theorem~\ref{thm:kummer-graph-composita}.
\end{example}

\section{Nonabelian composita from diagonal pro-\texorpdfstring{\(p\)}{p} RAAGs}
\label{sec:raag-composita}

We now treat a nonabelian family in which the maximal pro-\(p\) Galois
groups are right-angled Artin pro-\(p\) groups and the cohomology algebras
have nontrivial zero-product relations.

We use the following elementary fact about finite subextensions of a
maximal pro-\(p\) extension.

\begin{lemma}
\label{lem:persistence}
Let \(k\) be a field and let \(L/k\) be a finite extension with
\[
k\subseteq L\subseteq k(p).
\]
Then
\[
L(p)=k(p).
\]
Consequently,
\[
G_L(p)=\operatorname{Gal}(k(p)/L)
\]
is an open subgroup of \(G_k(p)\).
\end{lemma}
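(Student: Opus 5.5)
We prove the two inclusions \(L(p)\subseteq k(p)\) and \(k(p)\subseteq L(p)\) inside the fixed separable closure \(k^{\mathrm{sep}}\). Recall that \(k(p)\) is the compositum of all finite Galois \(p\)-extensions of \(k\) in \(k^{\mathrm{sep}}\). Equivalently, it is the fixed field of the intersection of all open normal subgroups of \(G_k\) of \(p\)-power index. In particular \(k(p)/k\) is Galois with pro-\(p\) group \(G_k(p)\).

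For the inclusion \(k(p)\subseteq L(p)\), let \(N/k\) be a finite Galois \(p\)-extension inside \(k(p)\). Then \(NL/L\) is Galois, and restriction identifies \(\operatorname{Gal}(NL/L)\) with \(\operatorname{Gal}(N/N\cap L)\). The latter is a subgroup of the \(p\)-group \(\operatorname{Gal}(N/k)\), so \(NL/L\) is a finite Galois \(p\)-extension and \(N\subseteq NL\subseteq L(p)\). Taking the union over all such \(N\) gives \(k(p)\subseteq L(p)\).

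For the reverse inclusion \(L(p)\subseteq k(p)\), let \(E/L\) be a finite Galois \(p\)-extension and let \(\widetilde E\) be its Galois closure over \(k\). We must show \([\widetilde E:k]\) is a power of \(p\). This is the main step, and I would argue group-theoretically. Since \(L\subseteq k(p)\) is finite over \(k\), it lies in some finite Galois \(p\)-extension \(N_0/k\), so \([L:k]\) is a power of \(p\) and \([E:k]=[E:L][L:k]\) is a \(p\)-power. However, a \(p\)-power degree does not by itself make the Galois closure a \(p\)-extension. Instead we use the fact that \(\widetilde E\) is the compositum of the conjugates \(\sigma E\) for \(\sigma\in G_k\).

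Each \(\sigma E\) is Galois over \(\sigma L\), and \(\sigma L=L\) whenever \(L/k\) is normal. In general we enlarge: replace \(L\) by \(N_0\), and \(E\) by \(EN_0\), which is Galois of \(p\)-power degree over \(N_0\) by the first argument applied over \(L\). We may therefore assume \(L/k\) is Galois. Then every conjugate \(\sigma E\) is a Galois \(p\)-extension of \(L\), and so is their compositum \(\widetilde E\) over \(L\). Thus \(\operatorname{Gal}(\widetilde E/k)\) is an extension of the \(p\)-group \(\operatorname{Gal}(L/k)\) by the \(p\)-group \(\operatorname{Gal}(\widetilde E/L)\), and is itself a \(p\)-group. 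Hence \(\widetilde E\subseteq k(p)\), and consequently \(E\subseteq k(p)\).

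Combining the two inclusions gives \(L(p)=k(p)\). Then \(G_L(p)=\operatorname{Gal}(k(p)/L)\) is the closed subgroup of \(G_k(p)\) fixing \(L\). It has finite index \([L:k]\), so it is open in \(G_k(p)\).
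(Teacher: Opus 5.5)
Your proof is correct and follows essentially the same route as the paper. For the reverse inclusion, the paper passes to the normal closure \(L^{\mathrm n}\) of \(L\) inside \(k(p)\) where you use some finite Galois \(p\)-extension \(N_0\supseteq L\); it then takes the compositum of the conjugates of \(E\) lifted over that field and concludes because an extension of a \(p\)-group by a \(p\)-group is a \(p\)-group, exactly as you do. Your forward inclusion, via \(\operatorname{Gal}(NL/L)\cong\operatorname{Gal}(N/N\cap L)\), is just a more explicit version of the paper's one-line argument.
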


\begin{proof}
Since \(k(p)/L\) is Galois with pro-\(p\) Galois group, it is the union
of its finite Galois \(p\)-subextensions over \(L\). Hence
\(k(p)\subseteq L(p)\).

Conversely, let \(M/L\) be a finite Galois \(p\)-extension. Let
\(L^{\mathrm n}/k\) be the normal closure of \(L/k\) inside \(k(p)\).
Because \(k(p)/k\) is pro-\(p\), the finite Galois group
\(\operatorname{Gal}(L^{\mathrm n}/k)\) is a \(p\)-group. For every
\(k\)-embedding of \(M\) into a separable closure, the compositum of
the conjugate of \(M\) with \(L^{\mathrm n}\) is a finite Galois
\(p\)-extension of \(L^{\mathrm n}\). The compositum \(N\) of these
finitely many conjugates is therefore Galois over \(k\), and
\[
1\longrightarrow \operatorname{Gal}(N/L^{\mathrm n})
\longrightarrow \operatorname{Gal}(N/k)
\longrightarrow \operatorname{Gal}(L^{\mathrm n}/k)
\longrightarrow 1
\]
has \(p\)-groups at both ends. Thus \(N/k\) is a finite Galois
\(p\)-extension, so \(N\subseteq k(p)\). In particular
\(M\subseteq k(p)\). Therefore \(L(p)\subseteq k(p)\), proving the
claim.
\end{proof}

\begin{theorem}
\label{thm:raag-lattice}
Let \(p\) be a prime and let \(k\) be a field containing \(\mu_p\).
Assume that
\[
G_k(p)\cong G_\Gamma
\]
for the pro-\(p\) right-angled Artin group associated to a finite
diagonal graph \(\Gamma\). Then every finite extension
\[
k\subseteq L\subseteq k(p)
\]
has universally Koszul mod-\(p\) Galois cohomology.

Consequently, if \(K_1,\dots,K_m\) are finite subextensions of
\(k(p)/k\), then every field obtained from them by finitely many
composita and intersections has universally Koszul mod-\(p\) Galois
cohomology.
\end{theorem}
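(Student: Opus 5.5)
By Lemma~\ref{lem:persistence}, for every finite extension \(k\subseteq L\subseteq k(p)\) we have \(L(p)=k(p)\). Hence \(G_L(p)=\operatorname{Gal}(k(p)/L)\) is an open subgroup of \(G_k(p)\cong G_\Gamma\). I would then apply \cite[Theorem~1.2]{SnopceZalesskii}: since \(\Gamma\) is diagonal, every open subgroup of the pro-\(p\) RAAG \(G_\Gamma\) is again a pro-\(p\) RAAG \(G_{\Gamma_L}\) on a finite diagonal graph \(\Gamma_L\) (hereditary RAAG subgroup structure). If the cited form only gives "RAAG" without "diagonal", I would argue that diagonality passes to \(\Gamma_L\) from the same theorem's characterization of diagonal graphs by hereditariness. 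That characterization says an open subgroup of a diagonal RAAG has all its own open subgroups RAAGs, which forces \(\Gamma_L\) to be diagonal.

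Next I pass from the group to cohomology. Since \(\mu_p\subseteq k\subseteq L\) and \(\operatorname{char}\) is not an issue (containing \(\mu_p\) with \(p\)-closure), the norm-residue theorem gives \(H^\bullet(G_L,\Fp)\cong H^\bullet(G_L(p),\Fp)\). Here I use that inflation from the maximal pro-\(p\) quotient is an isomorphism, because \(H^\bullet(G_L)\) is generated in degree one and \(H^1\) is the same for both groups. The mod-\(p\) cohomology of a pro-\(p\) RAAG \(G_{\Gamma_L}\) is the exterior Stanley--Reisner algebra \(A_{\Gamma_L}\). Theorem~\ref{thm:CQ} then gives universal Koszulity, because \(\Gamma_L\) is diagonal. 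A small point to check is the identification \(\mu_p\cong\Fp\) and square-zero degree-one classes when \(p=2\). These are automatic, since the RAAG cohomology is exterior.

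For the consequence, I observe that if \(K_1,K_2\) are finite subextensions of \(k(p)/k\), then \(K_1K_2\) and \(K_1\cap K_2\) are again finite extensions of \(k\) contained in \(k(p)\). The compositum is finite of degree at most \([K_1:k][K_2:k]\), and the intersection is a subfield of a finite extension. By induction on the number of operations, every field built from \(K_1,\dots,K_m\) by finitely many composita and intersections is a finite subextension of \(k(p)/k\). The first part then applies.

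The main obstacle is the precise extraction of the hereditary statement from \cite{SnopceZalesskii}: that open subgroups of diagonal pro-\(p\) RAAGs are RAAGs on diagonal graphs. I would also check that the isomorphism between the cohomology of \(G_L\) and \(G_L(p)\) holds under the stated hypotheses. I would state both explicitly as quoted inputs rather than reprove them.
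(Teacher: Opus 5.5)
Your argument follows the paper's proof: Lemma~\ref{lem:persistence}, then Snopce--Zalesskii to get $G_L(p)\cong G_\Lambda$ with $\Lambda$ diagonal, then Theorem~\ref{thm:CQ}, and finally the observation that composita and intersections of finite subextensions of $k(p)/k$ are again finite subextensions. Note that $\Lambda$ is finite because $G_L(p)$ is open, hence finitely generated; you assert this rather than derive it.

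\textbf{Diagonality of $\Lambda$.} The paper gets this from the realizability half of Snopce--Zalesskii: $G_\Lambda\cong G_L(p)$ is realized by the field $L\supseteq\mu_p$. Your fallback instead uses the hereditary half transitively. That also works, but quantify over closed subgroups, which is the class the characterization uses, not just open ones. Closed subgroups of $G_\Lambda$ are closed subgroups of $G_\Gamma$, hence RAAGs.

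\textbf{The extra cohomology step.} You identify $H^\bullet(G_L,\Fp)$ with $H^\bullet(G_L(p),\Fp)$, but generation in degree one plus equality of $H^1$ only shows that inflation is surjective. For injectivity, use $H^1(L(p),\Fp)=0$. By the norm-residue theorem for $L(p)$, this forces $H^n(L(p),\Fp)=0$ for all $n\ge1$, so the Hochschild--Serre spectral sequence collapses. The paper avoids this step by stating the conclusion directly for $H^\bullet(G_L(p),\Fp)$.
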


\begin{proof}
By Lemma~\ref{lem:persistence},
\[
G_L(p)=\operatorname{Gal}(k(p)/L)
\]
is an open, hence finitely generated, subgroup of \(G_\Gamma\).
Snopce--Zalesskii's Theorem~1.2 gives the equivalence between diagonalness
of \(\Gamma\) and the hereditary property that every closed subgroup of
\(G_\Gamma\) is a pro-\(p\) RAAG. Applying this implication to \(G_L(p)\)
gives
\[
G_L(p)\cong G_\Lambda
\]
for a graph \(\Lambda\). Since \(G_L(p)\) is finitely generated,
\(\Lambda\) has finitely many vertices.

The same theorem separately gives the equivalence between diagonalness of a
finite graph and realizability of its pro-\(p\) RAAG as \(G_F(p)\) for a
field \(F\) containing a primitive \(p\)-th root of unity. Since
\(G_\Lambda\cong G_L(p)\), the graph \(\Lambda\) is diagonal.
Cassella--Quadrelli's criterion then yields universal Koszulity of
\[
H^\bullet(G_L(p),\Fp).
\]

A compositum or intersection of finitely many finite subextensions of
\(k(p)/k\) remains a finite subextension of \(k(p)/k\), giving the final
assertion.
\end{proof}

\begin{remark}
Under the Galois correspondence, composita correspond to intersections of
open subgroups. The hereditary RAAG property therefore gives closure of the
finite \(p\)-extension lattice.
\end{remark}

For every prime \(p\), including \(p=2\), the continuous mod-\(p\)
cohomology of a pro-\(p\) RAAG has its standard exterior
Stanley--Reisner presentation. Thus square vanishing in degree one is part
of the RAAG cohomology presentation itself.

We now make this construction explicit on an infinite nonabelian
family. Let \(\mathcal E_d\) denote the edgeless graph on \(d\)
vertices and let \(K_s\) denote the complete graph on \(s\) vertices.
The join
\[
\Gamma_{d,s}=\mathcal E_d\ast K_s
\]
is diagonal: every vertex coming from \(K_s\) is dominating, while the
vertices coming from \(\mathcal E_d\) have no edges among themselves.
Its pro-\(p\) RAAG is
\[
G_{\Gamma_{d,s}}\cong F_d\times \mathbb Z_p^s,
\]
where \(F_d\) is the free pro-\(p\) group of rank \(d\).
By Snopce--Zalesskii, there exists a field \(k\) containing \(\mu_p\)
with this maximal pro-\(p\) Galois group.

\begin{theorem}
\label{thm:split-raag-lattice}
Let \(k\) contain \(\mu_p\) and suppose
\[
G_k(p)\cong F_d\times\mathbb Z_p^s,
\qquad d\ge2,\quad s\ge1.
\]
For every finite extension
\[
k\subseteq L\subseteq k(p),
\]
put \(H=G_L(p)\), viewed as an open subgroup of
\(F_d\times\mathbb Z_p^s\), and let
\[
P=\operatorname{pr}_{F_d}(H)\le F_d.
\]
Then \(P\) is open in \(F_d\), and if
\[
q=[F_d:P],
\]
then
\[
G_L(p)\cong F_{\,1+q(d-1)}\times\mathbb Z_p^s.
\]
Consequently
\[
H^\bullet(G_L(p),\Fp)
\cong
A_{\mathcal E_{1+q(d-1)}\ast K_s},
\]
so every field in the finite \(p\)-extension lattice of \(k\) has an
explicit diagonal Stanley--Reisner cohomology presentation.
\end{theorem}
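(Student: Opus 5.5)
The plan is to work purely group-theoretically inside \(G=F_d\times\mathbb Z_p^s\), writing \(Z=\{1\}\times\mathbb Z_p^s\). By Lemma~\ref{lem:persistence}, \(H=G_L(p)\) is an open subgroup of \(G\). I would show that \(H\) splits as a direct product of its projection \(P\) with an open subgroup of \(Z\). I would then identify \(P\) via the pro-\(p\) Schreier formula and read off the cohomology from the RAAG presentation.

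\emph{Openness.} Since \(\operatorname{pr}_{F_d}:G\to F_d\) is a continuous open surjection, the image \(P=\operatorname{pr}_{F_d}(H)\) of the open subgroup \(H\) is open in \(F_d\). Hence \(q=[F_d:P]\) is a finite power of \(p\). The pro-\(p\) Nielsen--Schreier theorem then says that \(P\) is free pro-\(p\) of rank \(1+q(d-1)\).

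\emph{The kernel.} The kernel of \(\operatorname{pr}_{F_d}|_H:H\to P\) is \(Z_H=H\cap Z\). Because \(H\) is open in \(G\), the subgroup \(Z_H\) is open in \(Z\cong\mathbb Z_p^s\). It is therefore isomorphic to \(\mathbb Z_p^s\), being a finite-index closed subgroup of a free \(\mathbb Z_p\)-module of rank \(s\). Moreover \(Z_H\) is central in \(H\), since \(Z\) is central in \(G\).

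\emph{Splitting.} This gives a central extension
\[
1\longrightarrow Z_H\longrightarrow H\longrightarrow P\longrightarrow 1.
\]
Since \(P\) is free pro-\(p\), it is projective in the category of pro-\(p\) groups, so there is a continuous section \(\sigma:P\to H\) that is a homomorphism. Then \(\sigma(P)\cap Z_H=1\), because \(\operatorname{pr}_{F_d}\circ\sigma=\mathrm{id}\). Also \(H=\sigma(P)Z_H\), and the two factors commute because \(Z_H\) is central. Hence multiplication \(\sigma(P)\times Z_H\to H\) is an isomorphism of pro-\(p\) groups, giving
\[
G_L(p)\cong F_{1+q(d-1)}\times\mathbb Z_p^s.
\]
I expect this step to be the main point needing care. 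One must check that the section is continuous, which is guaranteed by the freeness and projectivity of \(P\). One must also check that the resulting bijective continuous homomorphism is a homeomorphism, which holds automatically for compact Hausdorff groups.

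\emph{Cohomology.} The group \(F_r\times\mathbb Z_p^s\) is the pro-\(p\) RAAG of the join \(\mathcal E_r\ast K_s\). Its mod-\(p\) cohomology is the exterior Stanley--Reisner algebra \(A_{\mathcal E_r\ast K_s}\), as recalled before the theorem. Alternatively, one can obtain this via the Künneth formula, as \(H^\bullet(F_r)\otimes\Lambda(\mathbb Z_p^s)\) with all products between free generators equal to zero. The join is diagonal by the remark preceding the theorem. Theorem~\ref{thm:CQ} therefore also yields universal Koszulity, consistent with Theorem~\ref{thm:raag-lattice}.
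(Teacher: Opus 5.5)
Your proposal is correct and follows the paper's proof essentially step for step. In both, \(H=G_L(p)\) is open by Lemma~\ref{lem:persistence}; the central extension \(1\to H\cap Z\to H\to P\to 1\) splits because the open subgroup \(P\) of \(F_d\) is free pro-\(p\) and hence projective; centrality turns the splitting into \(H\cong P\times(H\cap Z)\); and the pro-\(p\) Schreier formula together with the RAAG cohomology presentation finishes the argument. Your extra checks (openness of the projection, continuity of the section, and that the multiplication map is an isomorphism of compact groups) fill in details the paper leaves implicit.
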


\begin{proof}
By Lemma~\ref{lem:persistence}, \(H=G_L(p)\) is an open subgroup of
\(F_d\times\mathbb Z_p^s\). Write
\[
Z=\mathbb Z_p^s.
\]
Since \(H\) is open, \(H\cap Z\) is open in \(Z\). Every open subgroup
of \(\mathbb Z_p^s\) is abstractly isomorphic to \(\mathbb Z_p^s\), so
\[
H\cap Z\cong\mathbb Z_p^s.
\]
We use this abstract pro-\(p\) group isomorphism in the decomposition below.
Moreover, the image
\[
P=\operatorname{pr}_{F_d}(H)
\]
is open in \(F_d\). Projection gives an exact sequence
\[
1\longrightarrow H\cap Z
\longrightarrow H
\longrightarrow P
\longrightarrow1.
\]
The group \(P\) is an open subgroup of a free pro-\(p\) group and is
therefore free pro-\(p\). Free pro-\(p\) groups are projective in the
category of pro-\(p\) groups, so the displayed sequence splits. Since
\(H\cap Z\) lies in the central factor \(Z\), it is central in \(H\);
therefore the splitting is a direct product:
\[
H\cong P\times(H\cap Z).
\]
The pro-\(p\) Schreier formula gives
\[
P\cong F_{1+q(d-1)},
\qquad q=[F_d:P],
\]
and hence
\[
G_L(p)=H\cong F_{1+q(d-1)}\times\mathbb Z_p^s.
\]
The final statement follows from the standard RAAG cohomology
presentation and the fact that
\(\mathcal E_{1+q(d-1)}\ast K_s\) is diagonal.
\end{proof}

\begin{remark}
Theorem~\ref{thm:split-raag-lattice} gives more than preservation of
universal Koszulity: it determines the isomorphism type of the maximal
pro-\(p\) Galois group throughout the finite \(p\)-extension lattice inside \(k(p)\),
up to the single numerical invariant
\([F_d:\operatorname{pr}_{F_d}G_L(p)]\). In particular, composita and
field intersections stay inside the same split-RAAG family.
\end{remark}

\begin{theorem}
\label{thm:split-raag-composita}
Fix integers \(d\ge2\), \(s\ge1\), and \(1\le r\le d\). Let \(k\) be
a field containing \(\mu_p\) such that
\[
G_k(p)\cong F_d\times\mathbb Z_p^s.
\]
Choose linearly independent characters
\[
\chi_1,\dots,\chi_r\in H^1(F_d,\Fp).
\]
Via pullback along the projection
\[
\operatorname{pr}_{F_d}:F_d\times\mathbb Z_p^s\longrightarrow F_d,
\]
regard them as classes in \(H^1(F_d\times\mathbb Z_p^s,\Fp)\).
Equivalently, they are trivial on the factor \(\mathbb Z_p^s\). By
Kummer theory choose classes \(a_i\in k^\times/k^{\times p}\)
corresponding to these characters, and put
\[
K_i=k(\sqrt[p]{a_i}),
\qquad
K=K_1\cdots K_r.
\]
Then the fields \(K_i/k\) are linearly disjoint cyclic extensions of
degree \(p\),
\[
\operatorname{Gal}(K/k)\cong (\mathbb Z/p\mathbb Z)^r,
\]
and
\[
G_K(p)
\cong
F_{\,1+p^r(d-1)}\times\mathbb Z_p^s.
\]
In particular,
\[
H^\bullet(G_K(p),\Fp)
\cong
A_{\mathcal E_{1+p^r(d-1)}\ast K_s}
\]
and this algebra is universally Koszul.
\end{theorem}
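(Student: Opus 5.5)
The plan is to reduce everything to Theorem~\ref{thm:split-raag-lattice} by identifying the open subgroup \(H=G_K(p)\) and computing the index \(q=[F_d:\operatorname{pr}_{F_d}H]\).

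First, Kummer theory (with \(\mu_p\subseteq k\)) identifies \(k^\times/k^{\times p}\cong H^1(G_k,\Fp)=H^1(G_k(p),\Fp)\). Since the \(\chi_i\) are linearly independent in \(H^1(F_d,\Fp)\) and pullback along the surjection \(\operatorname{pr}_{F_d}\) is injective on \(H^1\), the classes \(a_1,\dots,a_r\) are linearly independent in \(k^\times/k^{\times p}\). Standard Kummer theory then shows that each \(K_i/k\) is cyclic of degree \(p\). It also shows that the compositum \(K\) corresponds to the subgroup \(\langle a_1,\dots,a_r\rangle\) of rank \(r\), so \([K:k]=p^r\) and \(\operatorname{Gal}(K/k)\cong(\mathbb Z/p\mathbb Z)^r\); in particular the \(K_i\) are linearly disjoint. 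All \(K_i\) lie in \(k(p)\), so \(K\subseteq k(p)\) is a finite extension and Lemma~\ref{lem:persistence} applies.

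Next, the Galois correspondence identifies \(H=G_K(p)\) with
\[
H=\bigcap_{i=1}^r\ker\bigl(\chi_i\circ\operatorname{pr}_{F_d}\bigr)
=\operatorname{pr}_{F_d}^{-1}(P_0)=P_0\times\mathbb Z_p^s,
\qquad P_0=\bigcap_{i=1}^r\ker\chi_i\le F_d .
\]
Here each \(\chi_i\), viewed as a homomorphism to \(\Fp\), has kernel equal to the subgroup fixing \(K_i\). Hence \(\operatorname{pr}_{F_d}(H)=P_0\). The map \((\chi_1,\dots,\chi_r):F_d\to\Fp^r\) is surjective because the characters are linearly independent: a proper image would be contained in a hyperplane, which would give a linear dependence. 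It follows that \(q=[F_d:P_0]=p^r\).

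Finally, Theorem~\ref{thm:split-raag-lattice} with \(q=p^r\) gives \(G_K(p)\cong F_{1+p^r(d-1)}\times\mathbb Z_p^s\). This is also immediate here, since \(H=P_0\times\mathbb Z_p^s\) directly and the pro-\(p\) Schreier formula applies. The same theorem yields the cohomology presentation \(A_{\mathcal E_{1+p^r(d-1)}\ast K_s}\). The join graph is diagonal by the argument preceding Theorem~\ref{thm:split-raag-lattice}, so universal Koszulity follows from Theorem~\ref{thm:CQ}.

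The only delicate point is the identification of \(G_K(p)\) with the intersection of kernels inside \(G_k(p)\). This requires checking that the Kummer character of \(a_i\) on \(G_k\) factors through \(G_k(p)\) and agrees with \(\chi_i\circ\operatorname{pr}_{F_d}\) under the fixed isomorphism. That agreement holds by the choice of \(a_i\), since the inflation \(H^1(G_k(p),\Fp)\to H^1(G_k,\Fp)\) is an isomorphism.
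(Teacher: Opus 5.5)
Your proposal is correct and follows essentially the paper's proof: you identify \(G_K(p)\) via Lemma~\ref{lem:persistence} with \(\ker(\chi|_{F_d})\times\mathbb Z_p^s\), get index \(p^r\) from surjectivity of \((\chi_1,\dots,\chi_r)\), and apply the pro-\(p\) Schreier formula, the RAAG cohomology presentation and Theorem~\ref{thm:CQ}. The only differences are cosmetic: you derive \([K:k]=p^r\) and linear disjointness on the Kummer side from independence of the \(a_i\), where the paper uses surjectivity of \(\chi\) on the group side, and you mention the optional route through Theorem~\ref{thm:split-raag-lattice}.
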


\begin{proof}
Let
\[
\chi=(\chi_1,\dots,\chi_r):
F_d\times\mathbb Z_p^s\longrightarrow \Fp^r.
\]
The characters are independent and are trivial on the second factor,
so \(\chi\) is surjective and
\[
\ker(\chi)
=
\ker(\chi|_{F_d})\times\mathbb Z_p^s.
\]
The fixed field of \(\ker(\chi_i)\) is \(K_i\), and the fixed field of
\(\bigcap_i\ker(\chi_i)=\ker(\chi)\) is their compositum \(K\).
Therefore the \(K_i\) are linearly disjoint over \(k\) and
\(\operatorname{Gal}(K/k)\cong\Fp^r\).

By Lemma~\ref{lem:persistence},
\[
G_K(p)=\ker(\chi).
\]
The subgroup \(\ker(\chi|_{F_d})\) has index \(p^r\) in the free
pro-\(p\) group \(F_d\). The pro-\(p\) Schreier formula gives
\[
d\bigl(\ker(\chi|_{F_d})\bigr)
=1+p^r(d-1),
\]
so
\[
\ker(\chi|_{F_d})\cong F_{1+p^r(d-1)}.
\]
This proves the asserted group isomorphism.

The group \(F_n\times\mathbb Z_p^s\), with
\(n=1+p^r(d-1)\), is the pro-\(p\) RAAG attached to the join
\(\mathcal E_n\ast K_s\). Hence its cohomology is the exterior
Stanley--Reisner algebra of that graph. The graph is diagonal for the
same reason as \(\Gamma_{d,s}\), and universal Koszulity follows from
Theorem~\ref{thm:CQ}.
\end{proof}

\begin{corollary}
\label{cor:minimal-raag-compositum}
Let \(k\) contain \(\mu_p\) and satisfy
\[
G_k(p)\cong F_2\times\mathbb Z_p.
\]
There exist linearly disjoint cyclic degree-\(p\) extensions
\(K_1/k\) and \(K_2/k\) such that, for
\(K=K_1K_2\),
\[
G_K(p)\cong F_{p^2+1}\times\mathbb Z_p.
\]
Thus
\[
H^\bullet(G_K(p),\Fp)
\cong
A_{\mathcal E_{p^2+1}\ast K_1},
\]
the exterior Stanley--Reisner algebra of a star with \(p^2+1\)
leaves. In particular it is universally Koszul.
\end{corollary}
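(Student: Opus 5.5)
This corollary is the case \(d=2\), \(s=1\), \(r=2\) of Theorem~\ref{thm:split-raag-composita}. The plan is to verify its hypotheses and then simplify the resulting numerical invariants.

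First, since \(F_2\) is free pro-\(p\) of rank \(2\),
\[
H^1(F_2,\Fp)\cong\Fp^2 .
\]
So we choose \(\chi_1,\chi_2\) to be the dual basis to a basis of the free factor, which are linearly independent characters. Pulling them back along \(\operatorname{pr}_{F_2}\), they become classes in \(H^1(F_2\times\mathbb Z_p,\Fp)\) that vanish on the \(\mathbb Z_p\) factor. Via the identification \(H^1(G_k(p),\Fp)\cong H^1(G_k,\Fp)\) (inflation is an isomorphism in degree one) and Kummer theory, these give classes \(a_1,a_2\in k^\times/k^{\times p}\). We set \(K_i=k(\sqrt[p]{a_i})\).

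Theorem~\ref{thm:split-raag-composita} with \(r=2\) then gives three conclusions: \(K_1,K_2\) are linearly disjoint cyclic of degree \(p\), \(\operatorname{Gal}(K_1K_2/k)\cong(\mathbb Z/p\mathbb Z)^2\), and
\[
G_K(p)\cong F_{1+p^2(2-1)}\times\mathbb Z_p=F_{p^2+1}\times\mathbb Z_p .
\]
The same theorem identifies the cohomology with \(A_{\mathcal E_{p^2+1}\ast K_1}\) and shows that it is universally Koszul.

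Finally, the join \(\mathcal E_{p^2+1}\ast K_1\) is a single vertex adjacent to \(p^2+1\) pairwise nonadjacent vertices. This is the star with \(p^2+1\) leaves.

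There is no real obstacle here. The only point needing care is that the characters exist and are independent, which requires \(d=2\ge r=2\), so that the hypothesis \(r\le d\) of Theorem~\ref{thm:split-raag-composita} holds.
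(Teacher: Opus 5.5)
Your proposal is correct and matches the paper's proof, which is exactly the specialization \(d=2\), \(s=1\), \(r=2\) of Theorem~\ref{thm:split-raag-composita}. Your extra remarks are accurate elaborations of details the paper leaves implicit: two independent characters exist since \(r\le d\), inflation is an isomorphism in degree one, and \(\mathcal E_{p^2+1}\ast K_1\) is a star with \(p^2+1\) leaves.
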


\begin{proof}
Apply Theorem~\ref{thm:split-raag-composita} with
\(d=2\), \(s=1\), and \(r=2\).
\end{proof}

\section{Further directions}
\label{sec:further}

The preceding results suggest three directions in which the gluing methods can
be developed further.

\paragraph{Graph and algebraic gluing.}
The dominating-clique hypothesis in
Theorem~\ref{thm:diagonal-gluing} is sufficient but not necessary, as
Example~4.7 shows. A natural graph-theoretic problem is to characterize
exactly which gluings along a common induced subgraph preserve diagonality.
On the algebraic side, one can seek further structural decompositions of
fibre products for which the ideal and colon compatibilities of
Proposition~\ref{prop:formal-criterion} follow from the form of the
decomposition itself.

\paragraph{Arithmetic Kummer gluing.}
For a compositum \(K=K_1K_2\), the basic arithmetic data are the span
\[
U_1+U_2\subseteq H^1(K)
\]
and the mixed symbol map
\[
U_1\otimes U_2\longrightarrow H^2(K).
\]
Theorem~\ref{thm:laurent-composita} gives an explicit family in which both
are computed exactly and the mixed symbols are nonzero.
Theorem~\ref{thm:kummer-graph-composita} shows how the internal relation
spaces, their degree-two overlap, and the mixed symbol map assemble into the
quadratic presentation of the compositum. Computing these data for further
arithmetic families would produce new classes of composita whose universal
Koszulity is decided directly from Kummer theory.

\paragraph{RAAG and \(\Delta\)-RAAG composita.}
Theorem~\ref{thm:split-raag-composita} computes the maximal pro-\(p\) Galois
group when the Kummer characters are supported on the free factor of
\(F_d\times\mathbb Z_p^s\). Allowing characters with nontrivial components
on both factors leads to the problem of identifying the resulting RAAG graph
directly from the Kummer data.

For \(p=2\), Hamza, Maire, Min\'{a}\v{c}, and T\^{a}n
\cite{HamzaMaireMinacTan} describe maximal pro-\(2\) Galois groups of
formally real Pythagorean fields of finite type using
\(\Delta\)-right-angled Artin pro-\(2\) groups. Their graph data provide a
natural setting for the same composita calculation: one seeks the
restriction-image basis in degree one and the complete relation space
\[
\ker\left(\bigwedge\nolimits^2H^1(K,\Ftwo)\longrightarrow H^2(K,\Ftwo)\right).
\]
Identifying these data for explicit \(\Delta\)-RAAG composita would connect
the Kummer-gluing criterion with this broader graph-theoretic Galois
framework.


\begin{thebibliography}{99}

\bibitem{CassellaQuadrelli}
A.~Cassella and C.~Quadrelli,
``Right-angled Artin groups and enhanced Koszul properties,''
\textit{J. Group Theory} \textbf{24} (2021), no.~1, 17--38.

\bibitem{ConcaUniversallyKoszul}
A.~Conca,
``Universally Koszul algebras,''
\textit{Math. Ann.} \textbf{317} (2000), 329--346.

\bibitem{GaribaldiMerkurjevSerre}
S.~Garibaldi, A.~Merkurjev, and J.-P.~Serre,
\textit{Cohomological Invariants in Galois Cohomology},
University Lecture Series \textbf{28},
American Mathematical Society, Providence, RI, 2003.

\bibitem{HamzaMaireMinacTan}
O.~Hamza, C.~Maire, J.~Min\'{a}\v{c}, and N.~D.~T\^{a}n,
``Maximal \(2\)-extensions of Pythagorean fields and
right-angled Artin groups,''
arXiv:2510.11970 (2025).

\bibitem{MinacPalaistiPasiniTan}
J.~Min\'{a}\v{c}, M.~Palaisti, F.~W.~Pasini, and N.~D.~T\^{a}n,
``Enhanced Koszul properties in Galois cohomology,''
\textit{Res. Math. Sci.} \textbf{7} (2020), Paper~10.

\bibitem{MinacPasiniQuadrelliTan}
J.~Min\'{a}\v{c}, F.~W.~Pasini, C.~Quadrelli, and N.~D.~T\^{a}n,
``Koszul algebras and quadratic duals in Galois cohomology,''
\textit{Adv. Math.} \textbf{380} (2021), Paper~107569.

\bibitem{NeukirchSchmidtWingberg}
J.~Neukirch, A.~Schmidt, and K.~Wingberg,
\textit{Cohomology of Number Fields},
2nd ed., Grundlehren der Mathematischen Wissenschaften
\textbf{323}, Springer, 2008.

\bibitem{PalaistiThesis}
M.~Palaisti,
\textit{Enhanced Koszulity in Galois Cohomology},
Ph.D. thesis, The University of Western Ontario, London, Ontario, 2019.

\bibitem{SnopceZalesskii}
I.~Snopce and P.~A.~Zalesskii,
``Right-angled Artin pro-\(p\) groups,''
\textit{Bull. Lond. Math. Soc.} \textbf{54} (2022), no.~5,
1904--1922.

\bibitem{WeibelNormResidue}
C.~A.~Weibel,
``The norm residue isomorphism theorem,''
\textit{J. Topol.} \textbf{2} (2009), no.~2, 346--372.

\end{thebibliography}
\end{document}